\renewenvironment{abstract}
{\list{}{\rightmargin=2.6cm\leftmargin=\rightmargin}%
\item[]{\sc Abstract.}\hspace{-0.5em}
\small 
\relax}{\endlist}
\theoremstyle{custom} 
\newtheorem{thm}{Theorem}
\newtheorem{lem}{Lemma}
\newtheorem{prop}{Proposition}
\newtheorem{rmk}{Remark}
\newtheorem*{ac*}{Acknowledgement}
\newtheorem*{rmk*}{Remark}
\DeclareRobustCommand{\rchi}{{\mathpalette\irchi\relax}}
\newcommand{\irchi}[2]{\raisebox{0.5\depth}{$#1\chi$}}
\renewcommand\footnoterule{{\hrule width 4cm height .7pt \kern 0.5ex}}
\def\nfootnote{\def\@thefnmark{}\@footnotetext} 
\begin{document}

\nfootnote{MSC 2000: 22E43, 11F03, 11F70\\
Keywords: geodesic period, Maass form, hyperbolic manifold}

\title{\bf Upper bounds for geodesic periods over hyperbolic manifolds}
\author{Feng~~Su}
\date{}
\maketitle
\thispagestyle{empty}
\allowdisplaybreaks
\vspace*{-0.5cm}

\begin{abstract}
We prove an upper bound for geodesic periods of Maass forms over hyperbolic manifolds. By definition, such periods are integrals of Maass forms restricted to a special geodesic cycle of the ambient manifold, against a Maass form on the cycle. Under certain restrictions, the bound will be uniform.
\end{abstract}

\medskip

\section{Introduction}\label{intro}
Let $X$ be a $d$-dimensional connected complete hyperbolic manifold with finite volume, $\phi$ a square integrable Laplace eigenfunction on $X$, whose eigenvalue is denoted by $\lambda\geq 0$. In the theory of automorphic forms, $\phi$ is also called  ``Maass  form'' (after H. Maass \cite{ma}).  We normalize $\phi$ so that it has $L^2$-norm $1$. Let $Y$ be a special cycle of $X$ 
which is compact, totally geodesic and has  codimension 1 (see Section \ref{rep} for the precise description). Fix a hyperbolic measure $dy$ on $Y$.
Given a normalized Maass form $\psi$ on $Y$ with the Laplace eigenvalue 
$\mu\geq 0$, define
the period integral 
$$P_Y(\phi,\psi):=\int_Y\phi(y)\psi(y)dy.$$ 
This integral converges since $Y$ is compact and $\phi$, $\psi$ are smooth (by the elliptic regularity theorem). We call $P_Y(\phi,\psi)$ {\it geodesic period} from the geometric  perspective. Such a period fits into the general notion of {\it automorphic period} which plays a central role in the study of automorphic forms thanks to its close relations with automorphic representations and special values of certain automorphic $L$-functions 
(see \cite{ggp}, \cite{ii}, \cite{wa}, \cite{z} and references therein).

The aim of this paper is to prove an upper bound for the geodesic period $P_Y(\phi,\psi)$. 
\begin{thm}\label{thm}
There exists a positive constant $C_{\psi}$ 
depending on $\psi$ such that 
$$\big|P_Y(\phi,\psi)\big|\leqslant C_{\psi},
\quad\text{as}\,~\lambda\rightarrow\infty.$$ 
\end{thm}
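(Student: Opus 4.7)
The plan is to prove Theorem \ref{thm} by the representation-theoretic ``invariant-functional'' method pioneered by Bernstein--Reznikov. Write $X = \Gamma \backslash \mathbb{H}^d$ with $\Gamma \subset G = \mathrm{SO}(d,1)^\circ$, and let $H \cong \mathrm{SO}(d-1,1)^\circ$ be the stabilizer of a totally geodesic hyperplane in $\mathbb{H}^d$ whose quotient is $Y$. The form $\phi$, lifted to $\Gamma \backslash G$, generates an irreducible unitary subrepresentation $\pi \subset L^2(\Gamma \backslash G)$ whose infinitesimal parameter is determined by $\lambda$; similarly $\psi$ generates $\sigma \subset L^2(\Gamma_H \backslash H)$. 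The period $P_Y(\phi,\psi)$ is then the value, at the spherical vectors, of a continuous $H$-invariant bilinear form $\mathcal{J}^{\mathrm{aut}}$ on the smooth part of $\pi \otimes \bar{\sigma}$.

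The central input is the multiplicity-one property for the rank-one branching pair $(G,H)$: any such $H$-invariant bilinear form is unique up to scalar. Exploiting this, $\mathcal{J}^{\mathrm{aut}}$ can be compared with a model form $\mathcal{J}^{\mathrm{mod}}$ constructed explicitly in the spherical principal-series realizations of $\pi$ and $\sigma$ by an intertwining integral. Evaluated on the spherical vectors, $\mathcal{J}^{\mathrm{mod}}$ reduces to a Beta-type integral expressible in Gamma functions of the spectral parameters $s_\lambda, s_\mu$; Stirling's formula shows that, for fixed $\mu$, this value decays as a negative power of $\lambda$.

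It remains to control the comparison constant relating $\mathcal{J}^{\mathrm{aut}}$ to $\mathcal{J}^{\mathrm{mod}}$, and this I expect to be the main obstacle. The constant encodes the ratio between the global $L^2$-normalization of $\phi$ (resp.\ $\psi$) and its normalization inside the principal-series model, which in turn depends on the Plancherel density along the spherical axis together with a coherent choice of Haar measures on $G$, $H$, and the associated quotients. A practical route is to apply both forms to a common family of test vectors --- for instance a $(K,K_H)$-equivariant vector whose automorphic and model norms can be computed independently --- and to verify that the resulting ratio grows at most polynomially in $\lambda$, thereby exactly compensating for the Stirling decay of $\mathcal{J}^{\mathrm{mod}}$. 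Since $\psi$ is held fixed throughout, all remaining $\mu$- and lattice-dependent factors are absorbed into the constant $C_\psi$, yielding the desired uniform bound. A secondary technical concern is ensuring that the model intertwining integral converges (or admits analytic continuation) along the full range of parameters arising as $\lambda \to \infty$; this can be handled by treating the small-$\lambda$ and any exceptional-spectrum regimes separately and absorbing them into $C_\psi$.
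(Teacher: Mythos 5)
Your framework is correct up to the last step: you express $P_Y(\phi,\psi)$ as the value of an automorphic $\Delta(G')$-invariant form, invoke multiplicity one to write it as $b_{\tau,\tau'}\cdot L^{\mathrm{mod}}_{\tau,\tau'}(\mathfrak f,\mathfrak h)$, and observe via Stirling that the model value decays like $|\tau|^{-(d-1)/2}$. That much is also the paper's scaffolding. But the entire weight of the theorem then rests on bounding $b_{\tau,\tau'}$ by $|\tau|^{(d-1)/2}$, and it is precisely here that your sketch has a genuine gap.

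You describe $b_{\tau,\tau'}$ as ``the ratio between the global $L^2$-normalization and its normalization inside the principal-series model,'' controlled by Plancherel densities and Haar-measure choices, to be recovered by ``applying both forms to a common family of test vectors'' and checking the ratio is polynomial. This misidentifies the nature of the constant. Both $\phi$ and $\psi$ are already $L^2$-normalized, and the corresponding model vectors $\mathfrak f,\mathfrak h$ are normalized to have norm $1$ in $J(\tau)$, $J'(\tau')$; any remaining measure ambiguity is a fixed scalar absorbed once and for all. The constant $b_{\tau,\tau'}$ is a genuinely arithmetic quantity depending on the lattice $\Gamma$ (it is, via Waldspurger-type formulas, governed by central $L$-values), and there is no ``Plancherel'' computation that pins it down. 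In particular, evaluating both forms on a single spherical or $(K,K_H)$-equivariant vector yields only a tautology: $L^{\mathrm{aut}}$ evaluated there is the unknown period itself, and you learn nothing. The actual mechanism needed is the ``fattening'' argument of Reznikov, which the paper carries over to higher rank: one constructs test functions $f_T$ concentrated at scale $T^{-1}$, shows by a careful Taylor-expansion analysis that the kernel $K_{\tau,\tau'}$ is essentially constant on $\mathrm{supp}(u\bm{.}f_T)\times\mathrm{supp}(h)$ for $u$ ranging over a small open set $U_\alpha$ (so $|L^{\mathrm{mod}}(u\bm{.}f_T,h)|\gtrsim T^{-(d-1)/2}$ uniformly in $u$), and then exploits the embedding of $(\Gamma'\backslash G')\times U_\alpha$ into $\Gamma\backslash G$ together with Bessel's inequality in $L^2(\Gamma'\backslash G')$ to get $\int_{U_\alpha}|L^{\mathrm{aut}}(R(u)\Phi_{f_T},\Psi_h)|^2\xi(u)\,du\leq b$. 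Taking $T=|\tau|$ produces the bound $|b_{\tau,\tau'}|\ll_{\tau'}|\tau|^{(d-1)/2}$. None of this $L^2$-geometry --- the open set $U_\alpha$, the cutoff $\xi$, the orthonormal-basis trick on $\Gamma'\backslash G'$, the normalization $\|\Phi_{f_T}\|_{L^2(\Gamma\backslash G)}=1$ --- is present or implicit in your proposal, and without it the argument does not close; ``polynomial growth'' of $b_{\tau,\tau'}$ is exactly what has to be \emph{proved}, and a weaker polynomial bound (say $|\tau|^d$) would leave the theorem false.

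A secondary omission: the multiplicity-one assertion for the pair $(G,G')$ in higher dimension is nontrivial and requires the explicit hypothesis $(\tau+\rho,\tau'+\rho')\notin L_{\mathrm{even}}$ from the Kobayashi--Speh classification, which holds for $\tau$ on the imaginary axis; this should be cited and checked rather than assumed as folklore. And one must treat the degenerate case $\tau'=\rho'$ (where $J'(\tau')$ is reducible) separately, as the paper does.
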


We briefly review the history on the bounds for geodesic periods. Let us first focus on the case where $X$ is a hyperbolic surface. In this case the compact geodesic cycle $Y$ with codimension 1 is just a closed geodesic and we may choose $\psi$ to be a unitary character of $Y\simeq S^1$. 
In particular, when $X$ is compact,
the boundedness of 
$P_Y(\phi,1):=\int_Y\phi(y)dy$ 
was proved in \cite{go, he} 
by using the trace formula; 
in \cite{cs} the PDE techniques was used to show that $P_{Y}(\phi,1)$ 
converges to $0$ as $\lambda$ tends to 
$\infty$ (logarithm decay). 
See also \cite{bo, bgt} and a 
series of papers by Sogge and his 
collaborators for various work on 
$P_{Y}(\phi,1)$ in this setting.  
When $X$ is not necessarily compact and general unitary characters $\psi$ are involved, Reznikov \cite{re} proved that $P_Y(\phi,\psi)$ is bounded (from above) by a constant. As for the higher dimensional situation, a very general result by Zelditch \cite{ze} says that $P_Y(\phi,1)$ is bounded by a constant, for any compact Riemannian manifold $X$ without boundary and its submanifold $Y$ (see Remark 3.5 loc.\,cit.). 
However, due to Lemma 3.1 loc.\,cit.~where the Lagrangean distribution (from which the period is constructed) should be positive, the function $\psi$ has to be a constant.

Our theorem is a natural extension of \cite{re} to the higher dimensional situation. 
In this paper, $X$ is not necessarily compact, and we shall deal with general $\psi$.

Another type of question is to estimate $P_Y(\phi,\psi)$ where $\phi$ is fixed and $\psi$ varies. 
It turns out that the period decays exponentially with respect to $\mu$ (see \cite{mo}).
We do not expect such a strong decay in our setting. 
In fact, Reznikov \cite{re} has conjectured that $|P_Y(\phi,\psi)|\ll_\varepsilon\lambda^{-1/4+\varepsilon}$ over hyperbolic surfaces (when $\psi$ is fixed). 
This conjecture is, by Waldspurger's formula, consistent with the Lindel\"of conjecture on the critical value of certain automorphic $L$-functions. 
For $n$-dimensional compact special geodesic cycle $Y$ of $X$, we conjecture the following bound: 
$|P_Y(\phi,\psi)|\ll_{\varepsilon,\psi}\lambda^{-n/4+\varepsilon}$ 
(see Remark \ref{rmk3}). 

As in \cite{re}, the bound in Theorem \ref{thm} is not uniform with respect to $\psi$. 
In Section \ref{pr2}, we shall explain why it is not and then refine the proof of Theorem \ref{thm} to get a uniform bound, when $\phi$ and $\psi$ are under certain
restrictions.

Write $\lambda=\big(\frac{d-1}{2}\big)^2-\tau^2$ and $\mu=\big(\frac{d-2}{2}\big)^2-\tau^{\prime\,2}$. When $\lambda$, $\mu$ are large, $\tau$, $\tau'$ are purely imaginary.
\begin{thm}\label{thm2} 
\begin{enumerate}[label={\upshape(\arabic*)},leftmargin=*,topsep=.5ex,itemsep=-.5ex]
\item\label{c1}
If $|\tau|$, $|\tau'|$ are large and $\big||\tau|-|\tau'|\big|<D$ where $D$ is any fixed positive number, then 
$$|P_Y(\phi,\psi)|\ll_{_D} |\tau|^{\frac{d}{2}-\frac34}.$$  
\item\label{c2}
If $|\tau|$, $|\tau'|$ are large,  $\big||\tau|-|\tau'|\big|$ is not bounded and $|\tau|>|\tau'|$, then 
$$|P_Y(\phi,\psi)|\ll\frac{|\tau|^{\frac12}\cdot|\tau'|^{\frac{d-2}{2}}}{\big(|\tau|^2-|\tau'|^2\big)^{\frac14}}.$$  
\item   
If $|\tau|$, $|\tau'|$ are large,  $\big||\tau|-|\tau'|\big|$ is not bounded and $|\tau|<|\tau'|<C\cdot|\tau|$ where $C>1$ is any fixed positive number, then 
$$|P_Y(\phi,\psi)|\ll_{_C}\frac{|\tau|^{\frac12}\cdot|\tau'|^{\frac{d-2}{2}}}{\big(|\tau'|^2-|\tau|^2\big)^{\frac14}}\cdot e^{-\frac{\pi}{2}(|\tau'|-|\tau|)}.$$  
\end{enumerate} 
\end{thm}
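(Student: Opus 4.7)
Proof plan for Theorem \ref{thm2}. The plan is to refine the proof of Theorem \ref{thm} by replacing the crude bound on the key special-function kernel with a sharp uniform asymptotic. After the representation-theoretic reduction of Section \ref{rep} and the harmonic analysis for the symmetric pair $(G,H)=(SO(d,1),SO(d-1,1))$, one can write
$$P_Y(\phi,\psi) = A(\phi,\psi)\cdot I(\tau,\tau'),$$
where $A(\phi,\psi)$ is a constant depending only on $\phi$ and $\psi$ (and is bounded under the restrictions imposed on them), and $I(\tau,\tau')$ is an explicit one-dimensional integral whose integrand is a product of associated Legendre functions in the radial variable $r\in[0,\infty)$, parametrized respectively by $\tau$ and $\tau'$, multiplied by the Jacobian $(\sinh r)^{d-2}$. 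All three bounds of the theorem then reduce to corresponding uniform asymptotics of $I(\tau,\tau')$.

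The main analytic input is the uniform asymptotic expansion of $P^{\mu}_{-\frac12+i\tau}(\cosh r)$ as $|\tau|\to\infty$. Depending on the relative size of $|\tau|$ and $|\tau'|$, three distinct regimes arise for the stationary-phase analysis of $I(\tau,\tau')$. In case (1), when $\big||\tau|-|\tau'|\big|$ stays bounded, the stationary point of the combined phase coalesces with the turning point of one of the Legendre factors, so one must invoke an Airy-type uniform asymptotic, which yields the claimed $|\tau|^{d/2-3/4}$ after combining with the amplitudes and Jacobian. In case (2), when $|\tau|>|\tau'|$ with $|\tau|^2-|\tau'|^2$ unbounded, a genuine interior stationary point exists and classical stationary phase produces the factor $(|\tau|^2-|\tau'|^2)^{-1/4}$. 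In case (3), when $|\tau|<|\tau'|<C|\tau|$, the $H$-side Legendre function is oscillatory on a larger range of $r$ than the $G$-side one, so the phases never match on the real axis; a saddle-point shift into the complex $r$-plane produces the exponential decay factor $e^{-\frac{\pi}{2}(|\tau'|-|\tau|)}$, and the constraint $|\tau'|<C|\tau|$ keeps the relevant saddle inside the region where the Legendre asymptotics remain valid.

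Assembling the three regime-dependent estimates, the polynomial prefactors $|\tau|^{1/2}$ and $|\tau'|^{(d-2)/2}$ appearing in cases (2) and (3) arise as the product of the leading-order amplitudes in the Legendre asymptotics, integrated against the Jacobian $(\sinh r)^{d-2}$ over the window of effective stationary phase. In case (1) the two scales $|\tau|$ and $|\tau'|$ collapse and combine with the Airy gain $|\tau|^{-1/4}$ to give the single power $|\tau|^{d/2-3/4}$.

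The chief obstacle is the transition regime (case (1)): one must patch the interior stationary-phase expansion with the Airy boundary-layer expansion and show that the resulting bound is uniform in $\tau'$ throughout the window $\big||\tau|-|\tau'|\big|<D$. Case (3) is also delicate, since the saddle lies off the real axis and one has to justify a contour deformation of $I(\tau,\tau')$ while controlling the analytic continuation of the Legendre functions in a strip around the real line.
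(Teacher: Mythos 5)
Your proposal skips the step that is actually the heart of the paper's argument, and this is not a cosmetic gap.

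You write $P_Y(\phi,\psi)=A(\phi,\psi)\cdot I(\tau,\tau')$ with $A$ a bounded constant and $I$ an explicit special-function integral, and then spend all your effort on uniform asymptotics of $I$. But the period $P_Y(\phi,\psi)$ is an integral over the arithmetic quotient $\Gamma'\backslash G'$, not over a homogeneous space, so it cannot be written directly as a one-dimensional Legendre integral. The paper instead factors $P_Y(\phi,\psi)=L^{\rm aut}_{\tau,\tau'}(\tilde\phi,\tilde\psi)=b_{\tau,\tau'}\cdot L^{\rm mod}_{\tau,\tau'}(\mathfrak f,\mathfrak h)$ via multiplicity one, and the scalar $b_{\tau,\tau'}$ carries all the automorphic content. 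Crucially, $b_{\tau,\tau'}$ is \emph{not} bounded: the whole of Section~\ref{pr2} is devoted to proving, via the "fattening" argument of Lemma~\ref{lem2} with a $\tau'$-rescaled test function $h_r(y)=r^{(d-2)/2}h(ry)$, the growth bound $|b_{\tau,\tau'}|\ll|\tau|^{(d-1)/2}|\tau'|^{(d-2)/2}$ under the restriction $|\tau'/\tau|\leq C$. That growth is exactly what turns the decaying model value $L^{\rm mod}_{\tau,\tau'}(\mathfrak f,\mathfrak h)\asymp|\tau|^{-\rho+1/4}$ (case~(A)) into the \emph{growing} bound $|\tau|^{d/2-3/4}$ of case~(1). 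If $A(\phi,\psi)$ were genuinely bounded as you assume, case~(1) would give a decaying estimate, contradicting the statement. So the factorization you propose cannot be right as stated, and the argument for it is missing entirely.

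A secondary point: you do not actually need any stationary-phase, Airy-transition, or complex saddle-point analysis. The model functional $L^{\rm mod}_{\tau,\tau'}(\mathfrak f,\mathfrak h)$ is computed in closed form as a ratio of Gamma functions (this is imported from M\"ollers--{\O}rsted, Proposition~3.1), and all three asymptotic regimes (A)--(C), including the exponential factor $e^{-\frac{\pi}{2}(|\tau'|-|\tau|)}$ in case~(3), fall out directly from Stirling's formula applied to that closed form. The constraint $|\tau'|<C|\tau|$ in case~(3) is not there to keep a saddle in a good region; it is there because the bound \eqref{urb} on $b_{\tau,\tau'}$ requires $|\tau'/\tau|$ bounded (condition \eqref{restrict}), which in turn is what makes the rescaled test function $h_r$ give a uniform version of Lemma~\ref{lem2}\,\ref{B}. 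Your analytic machinery would at best reproduce (A)--(C) by a longer route, while leaving the essential estimate \eqref{urb} unaddressed.
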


\begin{rmk}
\begin{enumerate}[label={\normalfont(\roman*)}, wide=0pt, itemsep=-.1em]
\normalfont
\item
In \cite{mo}, the $O$-constant for the rapid decay of $P_Y(\phi,\psi)$ (as $\mu$ goes to $\infty$) depends on $\phi$. To the author's knowledge, we do not have a uniform bound when $\mu$ and $\lambda$ are both large and (absolutely) independent of each other. Unlike the third case of Theorem \ref{thm2}, when $|\tau|\geq |\tau'|$ and $|\tau|$, $|\tau'|$ are both large we do not expect the exponential factor to appear in the bound.  
\item
When $Y$ is noncompact, $P_Y(\phi,\psi)$ still converges as long as $\phi$ decays fast at the cusp(s) of $Y$. In this case, Theorem \ref{thm} and \ref{thm2} are still valid by the standard truncation method.  
\item\label{rmk3}
Replacing $X$ with any finite-volume hyperbolic space, Theorem \ref{thm} and \ref{thm2} still hold (see Section \ref{rep}). 
\end{enumerate}
\end{rmk}

The method of this paper, an adoption of \cite{re}, is of representation-theoretic nature. We first express periods as special values of an automorphic functional. By the uniqueness of invariant functionals of automorphic representations, we shall reduce the study of periods to that of a special functional and the corresponding proportionality scalar. The value of the special functional can be explicitly computed. We shall bound the proportionality scalar via a geometric argument based on Fourier analysis.

The paper is organized as follows. In Section \ref{rep} we collect some facts about the structure and representation theory of the Lorentz group which will be used later. In Section \ref{pf} we prove Theorem \ref{thm}. In particular, we shall apply the trick ``fattening'' in \cite{re} to the higher dimensional situation. The Multiplicity One Theorem and intertwining operator that we shall need in this part have been established by Kobayashi--Speh \cite{ks}. In Section \ref{pr2}, we prove Theorem \ref{thm2}.

\begin{ac*}
The author is very grateful to Andre Reznikov for his kind support and valuable discussions. The author thanks Binyong Sun and the anonymous referees for their useful suggestions which help to improve the paper. The research was partially supported by the ISF grant 533/14.
\end{ac*}

\section{Preliminaries}\label{rep}
By the uniformization theorem (see, e.g. \cite{fj} in which the manifolds are assumed to be orientable), any $d$-dimensional connected complete hyperbolic 
manifold $X$ with finite volume (not necessarily orientable) is isometrically isomorphic to the 
locally symmetric space $\Gamma\backslash G/K$ 
where $G=O(1,d)$, $\Gamma\cong\pi_1(X)$ is a lattice of 
$G$, and $K\subset G$ is a maximal compact subgroup. We may and will choose $K$ to be $\{\textup{diag}(\pm1,k)|k\in O(d)\}$.

As $X$ is smooth, $\Gamma$ is torsion-free. In this paper one can also start with $\Gamma\backslash G$, instead of $X$. Then we do not have to assume that $\Gamma$ is torsion-free. Meanwhile, the method of the paper still works.   Hence, Theorem \ref{thm} and \ref{thm2} hold for any lattice $\Gamma\subset G$.

The quotient space $\Gamma\backslash G$ is equipped with a $G$-invariant Radon measure that inherits from a Haar measure of $G$ and descends to the hyperbolic measure of $\Gamma\backslash G/K$. The group $G$ acts on $L^2(\Gamma\backslash G)$ by the right 
regular translation $R$. 
From this action we get a unitary representation of $G$ on $L^2(\Gamma\backslash G)$. 
In view of the isomorphisim $L^2(\Gamma\backslash G/K)
\cong L^2(\Gamma\backslash G)^K$, we can lift 
$\phi\in L^2(\Gamma\backslash G/K)$ to $\tilde{\phi}\in L^2(\Gamma\backslash G)$ such that $\tilde{\phi}$ is $K$-fixed. 
Let $V_{\tilde{\phi}}$ denote the closed subspace in $L^2(\Gamma\backslash G)$ which is spanned by $\{R(g)\tilde{\phi}\,|\,g\in G\}$. The Killing form of the Lie algebra $\mathfrak{g}$ of $G$ defines the Casimir operator $\square$ over $C_c^{\infty}(\Gamma\backslash G)$ which extends to $L^2(\Gamma\backslash G)$ as a self-adjoint operator.  When restricted to the smooth vectors in $L^2(\Gamma\backslash G)^K$, $\square$ is identified with $\Delta$ (the hyperbolic Laplacian). The Casimir operator commutes with the action $R$. Hence the completed subspace $V_{\lambda}\subset L^2(\Gamma\backslash G)$ of Casimir eigenfunctions with eigenvalue $\lambda$ is a subrepresentation space of $G$. 
It is known that $V_{\lambda}^K$ is finite dimensional (say, $q$-dimensional) and $V_{\lambda}$ can be decomposed as a direct sum of $q$ copies of equivalent  subrepresentations:  $V_{\lambda}\cong q\pi_{\lambda}$ where $\pi_{\lambda}\subset L^2(\Gamma\backslash 
G)$ is a unitary irreducible spherical representation of $G$ and $\pi_{\lambda}\cong V_{\tilde{\phi}}$. This  results from the duality theorem in \cite{ge}. 
Conversely, each element in $\pi_{\lambda}^K$ leads to a Maass form in $L^2(\Gamma\backslash G/K)$.

The spherical unitary dual of $G$ had been determined. Any nontrivil element in the dual is $G$-equivalent to an induced representation ${\rm Ind}_{MAN}^{G}(\bm{1}\otimes e^{\tau}\otimes\bm{1})$ 
where $MAN$ is a 
minimal parabolic 
subgroup of $G$ and 
$\tau$ is an element in 
$\mathfrak{a}_{\mathbb{C}}^{\ast}$. 
Denote by $\mathcal{B}$ the isomorphisim $\pi_{\lambda}\cong{\rm Ind}_{MAN}^{G}(\bm{1}\otimes e^{\tau}\otimes\bm{1})$. 
Next we recall the notions which are used to define the induced representation (they will also be used later). 
Let $\Theta$ be the Cartan involution of $G$ given by taking the transpose inverse. The Cartan involution $\theta$ on Lie algebra level leads to the vector space decomposition of the Lie algebra $\mathfrak{g}$ of $G$, namely, $\mathfrak{g}=
\mathfrak{p}\oplus\mathfrak{k}$ where $\mathfrak{k}=\textup{Lie}(K)=\{X\in\mathfrak{g}\,|\,\theta(X)=X\}$ is a Lie subalgebra of $\mathfrak{g}$ and $\mathfrak{p}=\{X\in\mathfrak{g}\,|\,\theta(X)=-X\}$.  
Let $\mathfrak{a}$ be a maximal abelian subspace of 
$\mathfrak{p}$. For any linear functional $\alpha$ on $\mathfrak{a}$, 
define $\mathfrak{g}_{\alpha}=\left\{X\in\mathfrak{g}\,|\,[H,X]=\alpha(H)X~\textup{for all}~H\in\mathfrak{a}\right\}$.
Those nonzero $\alpha$ 
such that $\mathfrak{g}_{\alpha}\ne\{0\}$ constitute a root system, denoted $(\mathfrak{g},\mathfrak{a})$. Let $E_{ij}=(e_{ij})$ be a $(d+1)\times(d+1)$ matrix whose entries satisfy $e_{lk}=1$ for $(l,k)=(i,j)$, and $e_{ik}=0$ otherwise. 
Denote 
$$E=E_{12}+E_{21}\quad\textup{and}\quad E_i=E_{1(i+2)}+E_{2(i+2)}+E_{(i+2)1}-E_{(i+1)2}\quad
(1\leqslant i\leqslant d-1).$$
We may choose $\mathfrak{a}=
\mathbb{R}\,E$. Then the root system $(\mathfrak{g},\mathfrak{a})$ 
consists of two elements $\pm\alpha_0$ where $\alpha_0$ (the positive root) 
is defined by ${\rm ad}(E)$. 
As ${\rm ad}(E)E_i=E_i$ ($1\leq i\leq d-1$), we have $\alpha_0(E)=1$ and $\mathfrak{g}_{\alpha_0}
=\mathfrak{n}:=
\mathbb{R}E_1
\oplus\cdots\oplus
\mathbb{R}E_{d-1}$. 
Let $A=\exp(\mathfrak{a})$, $N=\exp(\mathfrak{n})$. 
The groups $A$ and $N$ are both abelian.
We have the Iwasawa decomposition $G=NAK$.
The centralizer of $A$ in $K$, denoted $M$, is 
isomorphic to $O(d-1)\times\mathbb{Z}/2\mathbb{Z}$ via the map $$\iota_{_\flat}:\,O(d-1)\rightarrow M,\quad k\mapsto\textup{diag}(\flat,\flat,k),$$
where $\flat=\pm1$.
Since $\mathfrak{a}$ is of dimension $1$, we may identify $\mathfrak{a}_{\mathbb{C}}^{\ast}$ with $\mathbb{C}$ via the map $\mathfrak{a}_{\mathbb{C}}^{\ast}\rightarrow\mathbb{C}$, $\alpha\mapsto \alpha(E)$. From now on, we shall not distinguish a root from its image in $\mathbb{C}$ under this map.
The half sum of positive roots (whose multiplicities are counted) in the root system $(\mathfrak{g},\mathfrak{a})$ is $\rho=\frac{d-1}{2}\alpha_0(E)=\frac{d-1}{2}$.
By definition, the induced representation is 
$$I(\tau)={\rm Ind}_{MAN}^{G}(\bm{1}\otimes e^{\tau}\otimes\bm{1})
:=\left\{f:\,G\rightarrow\mathbb{C}\,\big|\,f(gman)=a^{-(\tau+\rho)}f(g),~f|_K\in L^2(K)\right\}$$ with the $G$-action: $L(g)f(x)=f(g^{-1}x)$. 
It is known that $I(\tau)$ is irreducible and unitarizable 
if and only if $\tau\in(-\rho,\rho)\cup \textup{\bf i}\,\mathbb{R}$. 
For such $\tau$ we have $I(\tau)\cong I(-\tau)$. 
The trivial representation of $G$ 
is equivalent to a subrepresentation of $I(-\rho)$, and also equivalent 
to the Langlands quotient representation $L(\rho)$ of $I(\rho)$ 
(note that $I(\rho)$ is reducible and has a unique nontrivial subrepresentation).  
The eigenvalue of $\square$ on $I(\tau)$ is $\lambda=\rho^2-\tau^2$, 
equal to the Laplace eigenvalue of $\phi\in V_{\lambda}^K$.

The notion of {\it noncompact picture} is important to us in this paper. Here we give an introduction (see Chapter VII of \cite{kn}, or Section 2.3 of \cite{mo}). Denote $\bar{N}=N^{\tiny\mathrm{T}}$ (\,$^{\tiny\mathrm{T}}$ means transpose). In the Bruhat decomposition 
$$
G=MAN\cup\bar{N}MAN,
$$
$\bar{N}MAN$ is open and dense in $G$, so $F\in I(\tau)^\infty$ (the smooth part of $I(\tau)$) is 
completely decided by its restriction to $\bar{N}$. 
For $x=(x_1,\ldots,x_{d-1})\in\mathbb{R}^{d-1}$, denote $$n_x=\exp\left(\sum\limits_{i=1}^{d-1}x_iE_i\right)\quad\textup{and}\quad \bar{n}_x=
n_x^{\tiny\mathrm{T}}=\exp\left(\sum\limits_{i=1}^{d-1}x_iE_i^{\tiny\mathrm{T}}\right).$$ 
Given $F\in I(\tau)^{\infty}$, define $\mathcal{A}F\in C^{\infty}(\mathbb{R}^{d-1})$ by
$$(\mathcal{A}F)(x)=F\left(\bar{n}_x\right).$$
Denote by $J(\tau)$ the image of $I(\tau)$ under the map $\mathcal{A}$. Then $C^{\infty}_c(\mathbb{R}^{d-1})\subseteq J(\tau)$. The action of $G$ on $J(\tau)$ is given by 
$$g\bm{.}(\mathcal{A}F)=\mathcal{A}\big(L(g)F\big).$$
Write $g$ as $g=\bar{n}(g)m(g)a(g)n(g)$ according to the Bruhat decomposition.
The group $G$ also acts on $\mathbb{R}^{d-1}$. The action, denoted $g\bm{.}x$ (where $g\in G$, $x\in\mathbb{R}^{d-1}$), satisfies
$$
\bar{n}_{g\bm{.}x}=\bar{n}(g\bar{n}_x).$$
The action of $G$ on $J(\tau)$ can then be rephrased as
$$(g\bm{.}f)(x)=a\left(g^{-1}\bar{n}_x\right)^{-(\tau+\rho)}f\left(g^{-1}\bm{.}\,x\right),\quad f\in J(\tau).
$$
Note that the action of $G$ on $J(\tau)$ depends on the parameter $\tau$.
For $\tau\in \textup{\bf i}\,\mathbb{R}$, the invariant Hermitian form on $J(\tau)$ is 
$$\|f\|^2_{J(\tau)}=\frac{\Gamma(2\rho)}{\pi^{\rho}\Gamma(\rho)}\int_{\mathbb{R}^{d-1}}|f(x)|^2dx,\quad f\in J(\tau).$$
For $\tau\in(0,\rho)$, the invariant Hermitian form on $J(\tau)$ is 
$$\|f\|^2_{J(\tau)}=\frac{\Gamma(2\rho)\Gamma(\tau+\rho)}{\pi^{\rho}\Gamma(\rho)\Gamma(\tau)}\int_{\mathbb{R}^{d-1}}
\int_{\mathbb{R}^{d-1}}|x-y|^{2(\tau-\rho)}f(x)\overline{f(y)}dxdy,\quad f\in J(\tau).$$
Modulo a unit scalar, the element in $J(\tau)$ that corresponds to the normalized Maass  form $\phi$ is 
$$\mathfrak{f}(x)={\cal A}\circ
\mathcal{B}(\tilde{\phi})=(1+|x|^2)^{-(\tau+\rho)}$$ 
which has $L^2$-norm $1$ with respect to the above Hermitian forms.

There is a unique  proper subrepresentation $J_0(\rho)$ in $J(\rho)$.
Each element in $J_0(\rho)$ 
is annihilated by an intertwining operator (see Section 10 and 11, Chapter VII of \cite{kn}). In particular,
$f\in J_0(\rho)$ satisfies $\int_{\mathbb{R}^{d-1}}
f(x)dx=0$. Thus the form
$$\|f\|^2_{J(\rho)}=
\frac{\Gamma(2\rho)^2}{\pi^{\rho}
\Gamma(\rho)^2}
\left|
\int_{\mathbb{R}^{d-1}}
f(x)dx\right|^2,
\quad f\in J(\rho),$$
is well-defined on the Langlands quotient $\tilde{J}(\rho):=J(\rho)/J_0(\rho)$. Since $\tilde{J}(\rho)$
is one-dimensional (it is equivalent to the trivial representation)
and $\|\mathfrak{f}\|^2_{J(\rho)}=1$,
the above form 
$\|\cdot\|^2_{J(\rho)}$ 
is indeed a Hermitian 
form 
on $\tilde{J}(\rho)$.

Now we introduce the special cycle. Denote
$$G'=\{\textup{diag}(g_1,g_2)\in G\,|\,g_1\in O(1,d-1),~g_2=\pm1\}.$$
Then
$$K'=K\cap G'=\{\textup{diag}(k_1,k_2)\in K\,|\,k_1\in O(d-1),~k_2=\pm1\}$$
is a maximal subgroup of $G'$. 
Passing to a finite cover if necessary, the $(d-1)$-dimensional compact  special geodesic cycle of $X$ is realized as the image of the quotient $\Gamma'\backslash G'/K'$ embedded in $\Gamma\backslash G/K$
where the image is a totally geodesic submanifold of $X$. 
Here $\Gamma'=\Gamma\cap G'\cong\pi_1(Y)$ is a uniform lattice of $G'$. Likewise, the Laplace eigenvalue $\mu$ of $\psi$ can be written as $\mu=\rho'^2-\tau'^{2}$ where $\rho'=\frac{d-2}{2}$ is the half sum of positive roots of the root system
$(\mathfrak{g}_0,\mathfrak{a})$ and $\tau'\in[-\rho',\rho']\cup \textup{\bf i}\,\mathbb{R}$. 
Without loss of generality, we identify $Y$ with $\Gamma'\backslash G'/K'$. 
We refer the reader to \cite{sch} for applications of geodesic cycles to the cohomology of arithmetic groups and automorphic forms.

Denote by $J'(\tau')$ the noncompact picture of the induced representation $I'(\tau')$ of $G'$ where $I'(\tau')={\rm Ind}_{M'A'N'}^{G'}(\bm{1}\otimes e^{\tau'}\otimes\bm{1})$ with $M'=M\cap G'$, $A'=A\cap G'=A$, $N'=N\cap G'$. 
We use $J'(\cdot)$ and $I'(\cdot)$ to distinguish  representations of $G'$ from representations $J(\cdot)$ and $I(\cdot)$ of $G$.

\section{Proof of Theorem \ref{thm}} \label{pf}
This section is devoted to the proof  of Theorem \ref{thm}.  
Write $x=\left(x',x''\right)\in\mathbb{R}^{d-2}\times
\mathbb{R}$ for $x\in\mathbb{R}^{d-1}$. From now on, we shall always assume that $\tau\in \textup{\bf i}\,\mathbb{R}$ since $\lambda=\left(\frac{d-1}{2}\right)^2-\tau^2$ is assumed to be sufficiently large in this paper.

By \cite{ks}, the intertwining operators in ${\rm Hom}_{G'}(J(\tau),J'(\tau'))$ exist and are unique up to scalar multiples, provided that
\begin{equation}\label{mult}
(\tau+\rho,\tau'+\rho')\in\mathbb{C}^2\smallsetminus L_{\rm even},
\end{equation}
where $L_{\rm even}=\big\{(-i,-j):j\leq i,~j\equiv i~(\rm mod~2)\big\}$. This is the so-called Multiplicity One Theorem. All intertwining operators were classified in \cite{ks}. Note that the variables $\lambda$, $\nu$ in \cite{ks}, when adapted to the present paper, are equal to $\tau+\rho$ and $\tau'+\rho'$, respectively. 
Since ${\rm Hom}_{G'}\big(J(\tau),J'(\tau')\big)\cong
{\rm Hom}_{\Delta(G')}\big(J(\tau)\otimes J'(-\tau'),\mathbb{C}\big)$, we may translate the intertwining operator in ${\rm Hom}_{G'}\big(J(\tau),J'(\tau')\big)$ to be a $\Delta(G')$-bilinear form on $J(\tau)\times J'(\tau')$. In such a way, we get a $\Delta(G')$-bilinear form $L^{\rm mod}_{\tau,\tau'}$: 
$$L^{\rm mod}_{\tau,\tau'}:\,J(\tau)\times J'(\tau')\to\mathbb{C},\quad (f,h)\mapsto\int_{\mathbb{R}^{d-1}}\int_{\mathbb{R}^{d-2}}K_{\tau,\tau'}(x,y)f(x)h(y)dxdy,$$
where the kernel $K_{\tau,\tau'}$ is given by
$$K_{\tau,\tau'}(x,y)=\big(
\big|x'-y\big|^2+\big|x''\big|^2\big)^{\tau'-\rho'}\cdot
\big|x''\big|^{\tau-\rho-(\tau'-\rho')}.$$  

Denote the first term in $K_{\tau,\tau'}$ by $\mathcal{S}$, and the second term by $\mathcal{T}$.
The form $L^{\rm mod}_{\tau,\tau'}$ is well-defined on $J(\tau)\times \tilde{J}'(\rho')$ where $\tilde{J}'(\rho')$ stands for the Langlands quotient of $J'(\rho')$. The reason is as follows. When $\tau'=\rho'$ and $d\geq 3$, the kernel is simplified to be $|x''|^{\tau-\rho}$. Thus, we have
$$L^{\rm mod}_{\tau,\rho'}(f,h)=\int_{\mathbb{R}^{d-2}}h(y)dy\cdot\int_{\mathbb{R}^{d-1}}|x''|^{\tau-\rho}f(x)dx.$$
Recall that $\int_{\mathbb{R}^{d-2}}h(y)dy=0$ for any $h$ in the (unique) proper subrepresentation $J'_0(\rho')$ of $J'(\rho')$ (see Section \ref{rep} for the discussion in the setting of $J(\rho)$), so the functional $L^{\rm mod}_{\tau,\rho'}$ is well-defined on $J(\tau)\times \tilde{J}'(\rho')=J(\tau)\times\big(J'(\rho')/J'_0(\rho')\big)$. When $d=2$, $J'(\tau')$ is empty and this case was treated in \cite{re}.

Similar to $\phi$, the natural lift $\tilde{\psi}$
(over $\Gamma'\backslash G'$) of
the Maass form $\psi$ generates an irreducible automorphic representation $V_{\mu}'\subseteq L^2(\Gamma'\backslash G')$ of $G'$. The operator
$$L^{\rm aut}_{\tau,\tau'}:\,V_{\lambda}\times
V'_{\mu}
\rightarrow\mathbb{C},\quad (g,j)\mapsto
\int_{\Gamma'\backslash G'}g(z)j(z)dz$$
defines a nonzero $\Delta(G')$-invariant bilinear form on $V_{\lambda}\times
V'_{\mu}$.  
Denote by $\mathcal{B}'$ and 
$\mathcal{A}'$ the 
isomorphisims 
$V'_{\mu}\cong
I'(\tau')$ and $I'(\tau')
\cong J'(\tau')$
respectively.
Then there exists a nonzero scalar $b_{\tau,\tau'}\in\mathbb{C}$ such that 
\begin{equation}\label{lin}
L^{\rm aut}_{\tau,\tau'}(g,j)=b_{\tau,\tau'}\cdot L^{\rm mod}_{\tau,\tau'}(f,h)
\end{equation}
where $f=\mathcal{A}\circ
\mathcal{B}(g)$, 
$h=\mathcal{A}'\circ
\mathcal{B}'(j)$.
Note that, for fixed $\tau$, $\tau'$, the scalar $b_{\tau,\tau'}$ 
does not depend on the choice of $f$ and $h$.
The spherical element in 
$J'(\tau')$ that  corresponds to the normalized Maass  form $\psi$ is 
$\mathfrak{h}(y)=
\mathcal{A}'\circ
\mathcal{B}'(\tilde{\psi})
=(1+|y|^2)^{-(\tau'+\rho')}$. In \cite{mo}, the value of  $L^{\rm mod}_{\tau,\tau'}(\mathfrak{f},\mathfrak{h})$ was explicitly computed (see Proposition 3.1 therein):
$$L^{\rm mod}_{\tau,\tau'}(\mathfrak{f},\mathfrak{h})=\frac{\pi^{\rho+\rho'}\Gamma(\rho')\Gamma\left(\frac{\tau+\rho+\tau'-\rho'}{2}\right)\Gamma\left(\frac{\tau+\rho-\tau'-\rho'}{2}\right)}{\Gamma(2\rho')\Gamma(\rho-\rho')\Gamma(\tau+\rho)}.$$

By Stirling's asymptotic formula
$$|\Gamma(x+\textup{\bf i}\,y)|=\sqrt{2\pi}\,|y|^{x-1/2}e^{-\frac{\pi |y|}{2}}
\Big(1+\mathcal{O}\left(|y|^{-1}\right)\Big),
\quad\textup{as}~|y|\rightarrow\infty,$$
one verifies the following asymptotic
\begin{equation}\label{asymp}
L^{\rm mod}_{\tau,\tau'}(\mathfrak{f},\mathfrak{h})
\asymp |\tau|^{-\frac{d-1}{2}},
\quad \textup{as}\,~|\tau|\rightarrow\infty,
\end{equation} 
for any fixed $\tau'$. 
Taking account of (\ref{asymp}), to show Theorem \ref{thm} it suffices to bound the scalar $b_{\tau,\tau'}$.  
We shall prove
\begin{prop}\label{prop} For any $\tau'$  
there exists a positive number $c_{\tau'}$ depending on $\tau'$ such that
$$\left|b_{\tau,\tau' }\right|\leq c_{\tau'}\,|\tau|^{\frac{d-1}{2}},\quad\text{as}\,~|\tau|\rightarrow\infty.$$
\end{prop}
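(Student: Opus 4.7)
The plan is to adapt Reznikov's \emph{fattening} strategy \cite{re} to the higher-dimensional setting, evaluating both sides of the proportionality (\ref{lin}) at a family of test vectors concentrated near the cycle rather than at the spherical vector $\mathfrak{f}$. Concretely, for a scale $\varepsilon>0$ to be optimized against $|\tau|$, I would take
$$
f_\varepsilon(x',x'')=\chi(x')\cdot\varepsilon^{-1/2}\chi_1(x''/\varepsilon)\in J(\tau)^\infty,
$$
a smooth bump supported in the $\varepsilon$-tube $\{|x'|\leq 1,\,|x''|\leq\varepsilon\}$ around the hyperplane $\{x''=0\}$ which corresponds to the cycle in the noncompact picture, normalized so that $\|f_\varepsilon\|_{J(\tau)}\asymp 1$. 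The automorphic counterpart $g_\varepsilon=(\mathcal{A}\circ\mathcal{B})^{-1}(f_\varepsilon)\in V_\lambda$ then satisfies $\|g_\varepsilon\|_{L^2(\Gamma\backslash G)}\asymp 1$, and (\ref{lin}) with $h=\mathfrak{h}$ gives
$$
|b_{\tau,\tau'}|=\frac{|L^{\rm aut}_{\tau,\tau'}(g_\varepsilon,\tilde\psi)|}{|L^{\rm mod}_{\tau,\tau'}(f_\varepsilon,\mathfrak{h})|}.
$$

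The denominator can be bounded from below by an explicit oscillatory-integral computation with the Kobayashi--Speh kernel. Since $\tau\in\textup{\bf i}\,\mathbb{R}$ and $\rho-\rho'=\tfrac12$, the factor $|x''|^{\tau-\rho-(\tau'-\rho')}$ has modulus $|x''|^{-1/2}$, so the $x''$-integral against $\chi_1(x''/\varepsilon)$ is a Mellin-type oscillatory integral whose size in $(\varepsilon,|\tau|)$ is tractable (up to a phase-alignment in $\chi_1$, or by taking $\chi_1$ positive to avoid cancellation). The remaining $(x',y)$-integration reduces, in the limit $x''\to 0$, to a convergent pairing of $\chi$ against $\mathfrak{h}$ through the surviving kernel $|x'-y|^{2(\tau'-\rho')}$, producing a positive constant depending on $\tau'$ alone. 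Combining these contributions with the normalization factor $\varepsilon^{-1/2}$ yields a polynomial-in-$(\varepsilon,|\tau|^{-1})$ lower bound for $|L^{\rm mod}_{\tau,\tau'}(f_\varepsilon,\mathfrak{h})|$.

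The numerator is bounded above by Cauchy--Schwarz on the compact cycle,
$$
|L^{\rm aut}_{\tau,\tau'}(g_\varepsilon,\tilde\psi)|\leq \|g_\varepsilon|_{\Gamma'\backslash G'}\|_{L^2(\Gamma'\backslash G')}\cdot\|\tilde\psi\|_{L^2}=\|g_\varepsilon|_{\Gamma'\backslash G'}\|_{L^2},
$$
and the restricted $L^2$-norm is controlled by a geometric lattice-point count. The support of $g_\varepsilon$ (pulled back to $G$) lies in an $\varepsilon$-tube around the preimage of the cycle, so once $\varepsilon$ is smaller than the injectivity radius of $Y$ in $X$ only a uniformly bounded number of $\Gamma$-translates of this tube contribute to the restriction. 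The local Sobolev-type estimate on each translate, combined with the $K$-type content of $f_\varepsilon$ (which scales with $\varepsilon^{-1}$), produces a bound polynomial in $\varepsilon^{-1}$ and $|\tau|$. Dividing the two bounds and choosing $\varepsilon$ so that the transversal concentration matches the oscillation scale $|\tau|^{-1}$ of matrix coefficients in $V_\lambda$ then yields $|b_{\tau,\tau'}|\ll c_{\tau'}\,|\tau|^{(d-1)/2}$.

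\textbf{The main obstacle} will be the sharp geometric restriction estimate for $\|g_\varepsilon|_{\Gamma'\backslash G'}\|_{L^2}$: one must transfer the transverse $\varepsilon$-concentration in the noncompact picture into an automorphic $L^2$ bound, uniformly in $|\tau|$, and then balance the resulting $(\varepsilon,|\tau|)$-dependence against the model lower bound. Reznikov's one-dimensional lattice argument from the surface case must be replaced by a higher-dimensional transversal analysis attuned to the codimension-$d$ embedding $G'\subset G$; the exponent $(d-1)/2$ in the final bound reflects precisely the $(d-1)$ transverse directions appearing in this analysis.
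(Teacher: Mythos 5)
Your plan diverges from the paper's argument in two places, and the second is a genuine gap.

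First, you place the test function in an $\varepsilon$-tube around the hyperplane $\{x''=0\}$ corresponding to the cycle. The paper instead places the bump $f_T$ at $x_{d-1}\approx 1$, well \emph{away} from $\{x''=0\}$, and concentrates it at scale $T^{-1}$. This matters for the model-side lower bound: away from $x''=0$ both pieces $\mathcal{S}$ and $\mathcal{T}$ of the Kobayashi--Speh kernel, together with the Jacobian factor $t^{-(\tau+\rho)}$, are shown to be approximately nonzero constants on the shrinking support, so $|L^{\rm mod}_{\tau,\tau'}(u\bm{.}f_T,h)|\gtrsim T^{-(d-1)/2}$ follows from a clean Taylor-expansion argument. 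Your choice puts the support on the singular locus of the kernel, so you have to face the Mellin-type integral $\int |x''|^{\tau-\rho-(\tau'-\rho')}\chi_1(x''/\varepsilon)\,dx''$ head on, and you leave its size as an assertion; for a positive $\chi_1$ this integral actually decays in $|\tau|$, which works against you, and in any case the $\varepsilon$-dependence and $|\tau|$-dependence are not pinned down.

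Second, and more seriously, your upper bound for the automorphic side is a Cauchy--Schwarz plus an $L^2$ restriction estimate for $g_\varepsilon$ on $\Gamma'\backslash G'$, to be proven by a lattice-point count and a ``local Sobolev-type estimate.'' This is not established, and it is not soft: $L^2$ restriction of Laplace eigenfunctions (or near-eigenfunctions with $K$-type content of size $|\tau|$) to a codimension-one hypersurface generically costs a factor of order $\lambda^{1/4}$ (Burq--G\'erard--Tzvetkov), so the ``uniformly bounded'' contribution you claim is exactly what needs to be proved, and it is unclear the resulting exponents close to $|\tau|^{(d-1)/2}$. The paper sidesteps this entirely with the fattening trick (Lemma~\ref{lem2}\,\ref{C}): it does not restrict a single translate, but integrates $|\Phi_{f_T}(tu)|^2\xi(u)$ over $U_\alpha\times(\Gamma'\backslash G')$, notes that for small $\alpha$ this product embeds as an open subset of $\Gamma\backslash G$, and hence the whole integral is $\leq \|\Phi_{f_T}\|^2_{L^2(\Gamma\backslash G)}=1$ (times a fixed constant). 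Expanding the restriction in an orthonormal basis of $L^2(\Gamma'\backslash G')$ containing $\overline{\Psi_h}$ then isolates a single term and gives, by the mean value theorem, a point $u_0\in U_\alpha$ where $|L^{\rm aut}_{\tau,\tau'}(R(u_0)\Phi_{f_T},\Psi_h)|^2\leq b$. Comparing with the model lower bound at $u_0$ yields the proposition with no restriction theorem needed. You should replace your Cauchy--Schwarz/lattice-count step with this averaging-over-$U_\alpha$ argument; a pointwise restriction bound is neither available nor necessary.

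Note also that your sketch, although it invokes ``fattening'' in the first sentence, never actually introduces the transversal averaging parameter $u\in U_\alpha$; the test vector $f_\varepsilon$ is fixed and evaluated once. The fattening in \cite{re} and in this paper is precisely the averaging over $u$, without which the automorphic upper bound has no source.
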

Given any fixed $\psi$, the condition (\ref{mult}) is fulfilled when $|\tau|$ is large (since $\tau\in \textup{\bf i}\,\mathbb{R}$). 
In view of the uniform asymptotic $|\tau|\sim\lambda^{1/2}$ (as $\lambda\rightarrow\infty$), Theorem \ref{thm} follows from  (\ref{asymp}) and Proposition \ref{prop}. To prove Proposition \ref{prop} we need two lemmas. 

Denote $a_r=\exp(\log rE)\in A$ for $r>0$. 
We have the following commutativity relations:
\begin{equation}\label{akn}
a_rn_x=n_{rx}a_r,\qquad
a_r\bar{n}_x=
\bar{n}_{r^{-1}x}
a_r,\qquad \iota_{_\flat}(k)n_x=n_{_{(xk^{\tiny\mathrm{T}})}}
\iota_{_\flat}(k),\qquad
\end{equation}
where $k\in O_{d-1}$. See Section \ref{rep} for the meaning of $\iota_{_\flat}$.

Define   $$N^{c}=\left\{n_x\,|\,x=(x_i)_{i=1}^{d-1}\in\mathbb{R}^{d-1},~x_i=0~{\rm for}~i\leqslant d-2\right\},$$
$$\bar{N}^c=\left\{\bar{n}_x\,|\,x=(x_i)_{i=1}^{d-1}\in\mathbb{R}^{d-1},~x_i=0~{\rm for}~i\leqslant d-2\right\}.$$
Then $N=N'N^c$ and $\bar{N}=\bar{N}'\bar{N}^c$ where
$$N'=
\left\{n_x\,|\,x=(x_i)_{i=1}^{d-1}
\in\mathbb{R}^{d-1},
~x_{d-1}=0  \right\},$$
$$\bar{N}'=
\left\{\bar{n}_x\,|\,x=(x_i)_{i=1}^{d-1}
\in\mathbb{R}^{d-1},
~x_{d-1}=0  \right\}.$$
According to the Bruhat decomposition, we may write $\bar{n}_{(y_1,\ldots,y_{d-2},0)}\cdot
n_{(0,\ldots,0,\delta)}\in \bar{N}'N^c$ as
\begin{equation}\label{write}
\bar{n}_{(y_1,\ldots,y_{d-2},0)}\cdot n_{(0,\ldots,0,\delta)}=n_zma_t
\bar{n}_s\in
 NMA\bar{N}
\end{equation}
with $z=(z_i)$, $s=(s_i)\in
\mathbb{R}^{d-1}$,
$t\in\mathbb{R}_+$ and $m=\iota_{_\flat}(\rchi)$ for some $\rchi=(\rchi_{ij})\in O(d-1)$.
The computation shows 
$t=\frac{1}{1+
\delta^2|y|^2}$ and
\begin{equation}\label{7} z_i=\left\{\begin{array}{ll}
\frac{\delta^2y_i}{1+\delta^2|y|^2},&\text{if}\,~1\leqslant i\leqslant d-2,\\[0.3cm]
\frac{\delta}{1+\delta^2|y|^2},&\text{if}\,~i=d-1,\end{array}
\right.
\end{equation} 
\begin{equation}\label{8} s_i=\left\{\begin{array}{ll}
\frac{y_i}{1+\delta^2|y|^2},&\text{if}\,~1\leqslant i\leqslant d-2,\\[0.3cm]
\frac{\delta|y|^2}{1+\delta^2|y|^2},&\text{if}\,~i=d-1,\end{array}
\right.
\end{equation} 
\begin{equation}\label{9}
2z_{d-1}s_it^{-1}+
\rchi_{d-1,\,i}=
\left\{\begin{array}{ll}0,&\text{if}\,~1\leqslant i\leqslant d-2,\\[0.3cm]
1,&\text{if}\,~i=d-1.\end{array}
\right.
\end{equation} 
\begin{lem}\label{lem0}
Under the natural quotient map, the image of 
$N^c\bar{N}$ in $G'\backslash G$ is isomorphic to
$${\cal F}:=\big(N^c\bar{N}
\smallsetminus
\bar{N}\big)\cup\bar{N}^c.$$ 
\end{lem}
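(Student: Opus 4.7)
My plan is to show that the restriction $\pi|_{\mathcal{F}}\colon\mathcal{F}\to G'\backslash G$ of the natural quotient map is a bijection onto $\pi(N^c\bar{N})$, which is what the claimed isomorphism amounts to. The argument splits into surjectivity (every $G'$-coset meeting $N^c\bar{N}$ has a representative in $\mathcal{F}$) and injectivity (distinct elements of $\mathcal{F}$ lie in distinct $G'$-cosets).

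For surjectivity, I would use that $\mathfrak{n}$ is abelian in rank one, so $\bar{N}$ is abelian and every $\bar{n}\in\bar{N}$ factors uniquely as $\bar{n}'\bar{n}^c$ with $\bar{n}'\in\bar{N}'$ and $\bar{n}^c\in\bar{N}^c$. Consequently each $g\in N^c\bar{N}$ factors uniquely as $g=n^c\bar{n}'\bar{n}^c$. If $n^c\neq e$ then $g\in N^c\bar{N}\setminus\bar{N}\subseteq\mathcal{F}$ already. If $n^c=e$, the containment $\bar{n}'\in\bar{N}'\subset G'$ gives $G'g=G'\bar{n}^c$, and the representative $\bar{n}^c$ lies in $\bar{N}^c\subseteq\mathcal{F}$.

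For injectivity, take $g_1,g_2\in\mathcal{F}$ with $h:=g_2g_1^{-1}\in G'$. If both $g_i\in\bar{N}^c$ then $h\in G'\cap\bar{N}=\bar{N}'$ while also $h\in\bar{N}^c$, so $h\in\bar{N}'\cap\bar{N}^c=\{e\}$. Otherwise at least one $g_i$ lies in $N^c\bar{N}\setminus\bar{N}$; writing $g_i=n_i^c\bar{n}_i$ (with $n_1^c\neq e$, say) and setting $\bar{u}:=\bar{n}_2\bar{n}_1^{-1}\in\bar{N}$, the condition becomes $h=n_2^c\bar{u}(n_1^c)^{-1}\in G'$. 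The subcase $\bar{u}=e$ gives $h\in N^c\cap G'=N^c\cap N'=\{e\}$, forcing $n_1^c=n_2^c$ and hence $g_1=g_2$; in particular this rules out the mixed case $n_2^c=e$, $n_1^c\neq e$. In the subcase $\bar{u}\neq e$, I would decompose $\bar{u}=\bar{u}'\bar{u}^c$, apply the Bruhat identity~(\ref{write}) to $\bar{u}'\cdot(n_1^c)^{-1}$, and carry out the parallel $SL_2$-type Bruhat decomposition for $\bar{u}^c\cdot(n_1^c)^{-1}$ inside the rank-one subgroup $\langle N^c,\bar{N}^c\rangle\cong O(1,2)$. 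Combining these via the commutation relations~(\ref{akn}) puts $h$ in Bruhat form whose $M$-component, determined explicitly by~(\ref{9}) in terms of the coordinates of $\bar{u}'$ and $n_1^c$, carries a rotation $\rchi$ whose last row $\bigl(2\delta_1 y_i/(1+\delta_1^2|y|^2)\bigr)_i$ cannot lie in $\iota_{_\flat}(O(d-2))=M'$ unless $y=0$, i.e., $\bar{u}'=e$; an analogous argument in the $O(1,2)$-subgroup handles the $\bar{u}^c$-contribution, and together they collapse to $\bar{u}=e$, contradicting the assumption $\bar{u}\neq e$.

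The main obstacle is the Bruhat bookkeeping in the final subcase: writing the combined $\bar{N}MAN$-decomposition of $h$ and verifying, using (\ref{7})--(\ref{9}), that the $M$-rotation component truly fails to land in $M'=\iota_{_\flat}(O(d-2))$ once $\bar{u}\neq e$. The computation is mechanical and follows directly from the formulas already established, but demands careful commutation of $NMA$- and $\bar{N}$-factors through (\ref{akn}).
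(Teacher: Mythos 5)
Your surjectivity argument and the first two injectivity cases are correct and closely mirror the paper; the key idea---Bruhat-decompose and constrain the $M$-component via (\ref{9})---is the same one the paper uses, though the paper organizes it differently: it parametrizes $g_0\in G'$ as $n'a_\ell\iota_{_\flat}(\tilde{k})\bar{n}'$ and compares the $NMA\bar{N}$-factors of both sides of $n_1n_2=g_0n_3n_4$, rather than computing the Bruhat decomposition of $h=g_2g_1^{-1}$ directly.

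The gap sits in the final subcase $\bar{u}\neq e$, which is the heart of the lemma. First, $M'$ is not $\iota_{_\flat}(O(d-2))$ but $\iota_{_\flat}\big(\{\operatorname{diag}(k,\omega):k\in O(d-2),\ \omega=\pm1\}\big)$: the $(d-1,d-1)$-entry of the $M$-rotation is allowed to be $-1$, and this is exactly the case the paper treats separately (the $\omega=-1$ bullet), where (\ref{7})--(\ref{9}) are used to derive a contradiction. Your sketch only forces $\chi_{d-1,i}=0$ for $i\le d-2$, giving $\delta y_i=0$; it never confronts $\chi_{d-1,d-1}=-1$, so the ``analogous argument in the $O(1,2)$-subgroup'' is precisely the missing case, not a routine repetition. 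Second, your bookkeeping plan applies $(n_1^c)^{-1}$ twice---once in decomposing $\bar{u}'(n_1^c)^{-1}$ via (\ref{write}) and again in an $O(1,2)$-decomposition of $\bar{u}^c(n_1^c)^{-1}$---but $h=n_2^c\bar{u}'\bar{u}^c(n_1^c)^{-1}$ contains only a single factor of $(n_1^c)^{-1}$, so those two decompositions cannot simply be ``combined.'' You would either need to extend (\ref{write}) to handle $\bar{N}N^c$ (not just $\bar{N}'N^c$) and thread the $M$- and $A$-factors through by (\ref{akn}), or adopt the paper's setup, which sidesteps this by decomposing $\bar{n}'n_3\in\bar{N}'N^c$ exactly once and then matching factors in (\ref{kr})--(\ref{uwl}).
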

\begin{proof}
Assume that there exist  $n_1n_2$, $n_3n_4\in N^c\bar{N}$ and 
$g_0\in G'$
such that
$n_1n_2=
g_0n_3n_4$. Write
\begin{itemize}[leftmargin=*]
\item 
$n_1=n_{(0,\ldots,0,\beta)}$,
$n_3=n_{(0,\ldots,0,\delta)}$ where $\beta$, $\delta\in\mathbb{R}$, 
\item 
$n_2=\bar{n}_v$,
$n_4=\bar{n}_\zeta$ where $v=(v_1,\ldots,v_{d-1})$, $\zeta=(\zeta_1,\ldots,\zeta_{d-1})\in\mathbb{R}^{d-1}$,
\item 
$g_0=n_{(w_1,\ldots,w_{d-2},0)}\cdot a_{\ell}\cdot
\iota_{_\flat}(\tilde{k})\cdot\bar{n}_{(y_1,\ldots,y_{d-2},0)}\in N'AM'\bar{N}'=G'$
where $w_i$, $y_i\in\mathbb{R}$,  $\ell\in\mathbb{R}_+$ and $\tilde k=\textup{diag}(k,\omega)\in O(d-1)$ for some $k\in O(d-2)$, $\omega=\pm1$. 
\end{itemize}
By  (\ref{akn}), (\ref{7}) and (\ref{8}), the equality $n_1\bar{n}_2=g_0n_3\bar{n}_4$ reads
\begin{equation}\label{reads}
n_{(0,\ldots,0,\beta)}{\cdot}\bar{n}_v=n_{(w_1,\ldots,w_{d-2},0)+\ell\cdot z\cdot\tilde{k}^{\tiny\mathrm{T}}}\cdot\iota_{_\flat}
(\tilde{k}\cdot\rchi)\cdot a_{\ell t}\cdot\bar{n}_{s+\zeta}.\end{equation} 
Comparing the two sides of \eqref{reads} yields
\begin{equation}\label{kr}
\tilde{k}\cdot\rchi=1\in SO(d-1),
\end{equation} 
\begin{equation}\label{lt}
\ell t=1,
\end{equation} 
\begin{equation}\label{vsm}
v=s+\zeta,
\end{equation} 
\begin{equation}\label{uwl}
(0,\ldots,0,\beta)=(w_1,\ldots,w_{d-2},0)+\ell\cdot z\cdot\tilde{k}^{\tiny\mathrm{T}}.
\end{equation} 
By (\ref{kr}), the 
$(d-1)$-th row of $\rchi$ is $\rchi_{d-1,\,\ast}=(0,\ldots,0,\omega)$. So we have
$\rchi_{d-1,d-1}=\omega=\pm1$
and 
$\rchi_{d-1,i}
=0$ for $i=1,\,\ldots\,d-2$.
\begin{itemize}[leftmargin=*]
\item 
If $\rchi_{d-1,d-1}=\omega=1$, then 
$z_{d-1}s_{d-1}=0$ by (\ref{9}). It follows that 
$z_{d-1}=0$  or $s_{d-1}=0$. 
Note that $z_{d-1}=0$ is 
equivalent to $\delta=0$ by (\ref{7}). 
\begin{itemize}
\item[(1)]
If $z_{d-1}=0$, i.e., 
$\delta=0$, then by (\ref{write}) we have:
$t=1$,  $z=0$, $m=1\in M$
and $s=(y_1,\ldots,y_{d-1},0)$.
Thus $k=1\in
SO(d-2)$ by (\ref{kr}),
$\ell=1$ by (\ref{lt}),
$v_{d-1}=\zeta_{d-1}$ by (\ref{vsm}),
$w_1=\cdots=
w_{d-2}=\beta=0$ by (\ref{uwl}).
Then we deduce that $n_1=n_3=1$ and 
$g_0=\bar{n}_{(y_1,\ldots,y_{d-2},0)}\in
\bar{N}'$.
Now (\ref{reads}) reads
\begin{equation}\label{nnc}
\bar{n}_{(v_1,\,\ldots,\,v_{d-1})}=
\bar{n}_{
(y_1+\zeta_1,\,\ldots,\,y_{d-2}+\zeta_{d-2},\,
\zeta_{d-1})}.
\end{equation}
\item[(2)] 
If $s_{d-1}=0$, then 
$\delta|y|^2=0$ by (\ref{8}). This implies that  $|y|=0$ since
$\delta=0$ is equivalent to 
$z_{d-1}=0$ (we have discussed this case in above).  
Substituting $y_1=\cdots=y_{d-2}=0$ into (\ref{write}), we get $z=(0,\ldots,0,\delta)$,
$m=1\in M$,
$t=1$ and $s=0$. Thus
$k=1\in
SO(d-2)$ by (\ref{kr}), 
$\ell=1$ by (\ref{lt}),
and $w_1=\cdots=w_{d-2}=0$
by (\ref{uwl}). It follows that
$g_0=1$. Now (\ref{reads}) reads $n_1n_2=n_3n_4\in N^c\bar{N}$, from which 
we get $n_1=n_3$ and 
$n_2=n_4$.
\end{itemize}
\item
If $\rchi_{d-1,d-1}=\omega=-1$, 
then $z_{d-1}s_{d-1}t^{-1}=1$
by (\ref{9}). Hence  $z_{d-1}\ne0$ and 
$s_{d-1}\ne 0$. As
$\rchi_{d-1,i}=0$ ($1\leqslant i\leqslant d-2$),  by (\ref{9}) we have $z_{d-1}s_i=0$ for 
$i=1,\,\cdots,\,d-2$, which implies that $s_i=0$ for 
$i=1,\,\cdots,\,d-2$. Hence
$y_1=\cdots=y_{d-2}=0$
by (\ref{8}). Substituting this property into (\ref{write}) yields
$m=1\in M$, whence $\rchi_{d-1,d-1}=1$, a contradiction.
\end{itemize}
The above discussion shows that the two elements 
$n_1n_2$, $n_3n_4\in N^c\bar{N}$ have the same image in  
$G'\backslash G$ (i.e.,
 $n_1n_2=g_0n_3n_4$ for some $g_0\in G'$) in two situations:
(1) $g_0=1$, $n_1=n_3\ne1$ and $n_2=n_4$;
(2) $n_1=n_3=1$, 
$g_0\in \bar{N}'$. 
Those elements $n_1n_2$ satisfying the former situation constitute the subset $N^c\bar{N}
\smallsetminus
\bar{N}\subset
G$ whose image in $G'\backslash G$ is 
identified with $N^c\bar{N}
\smallsetminus
\bar{N}$ itself.  
Those elements $n_1n_2$ satisfying the latter situation constitute the subset $\bar{N}\subset
G$ whose image in $G'\backslash G$
is, by \eqref{nnc},  identified with $\bar{N}^c$.
\end{proof}
Denote 
\begin{gather*}
N^c_+=\{n_{(0,\ldots,0,\delta)}\cdot\bar{n}\in N^c\bar{N}\,|\,\delta>0\},\\
N^c_-=\{n_{(0,\ldots,0,\delta)}\cdot\bar{n}\in N^c\bar{N}\,|\,\delta<0\}.
\end{gather*}
Then the intuitive picture of ${\cal F}$ is just the two separate pieces
$N^c_+$, $N^c_-$
wielded together by $\bar{N}^c$. Note that ${\cal F}$ is not a 
 manifold as it has a 1-dimensional singularity  $\bar{N}^c$. 
The quotient space $G'\backslash G$ is a smooth manifold 
with dimension $d=\dim G-\dim G'$. 
Lemma \ref{lem0} essentially characterizes a special open subset of $G'\backslash G$ in terms of elements in $G$. Namely,  
the subset $N^c\bar{N}
\smallsetminus
\bar{N}$ (which is the disjoint union of $N^c_+$ and $N^c_-$), when viewed as its image in $G'\backslash G$, is an 
open subset of $G'\backslash G$ since $\dim\big(N^c\bar{N}
\smallsetminus
\bar{N}\big)=\dim
N^c_+=\dim
N^c_-=
d$. 
From now on we shall 
identify ${\cal F}$
with its image in $G'\backslash G$.

For $f\in J(\tau)$, denote by $\Phi_f$ the image of 
$f$ in $V_{\lambda}$
under the map $\mathcal{B}^{-1}
\circ\mathcal{A}^{-1}$. Let $U_{\alpha}$ be a subset of 
${\cal F}$ defined to be $U_{\alpha}=\{n_{(0,\ldots,0,\delta)}\cdot\bar{n}_{(z_1,\ldots,z_{d-1})}:0<\delta<
\alpha,\,|z_i|<\alpha\}\subset \bar{N}^c_+$
where $\alpha$ is a positive number.  Clearly, $U_{\alpha}$ 
is open and simply connected in $G'\backslash G$. The point $G'{\cdot}e\in
G'\backslash G$ lies on the boundary of $U_{\alpha}$.
We require that $U_{\alpha}$ is fixed and small enough, i.e., 
$\alpha$ is fixed and small. As an open subset of 
$G'\backslash G$, $U_{\alpha}$ is equipped with a measure
$du$ induced from the invariant Radon measure 
$d\tilde{g}$ of $G'\backslash G$ such that 
$\textup{vol}(U_{\alpha})>0$. Recall that 
$d\tilde{g}$ is uniquely decided by the 
quotient integral formula: $\int_Gw(g)dg=
\int_{G'\backslash G}\int_{G'}
w(g^{\,\prime}\tilde{g})
dg^{\,\prime}d\tilde{g}$ for any $w\in L^1(G)$. Here $dg$, $dg^{\,\prime}$ 
are fixed Haar measures on $G$ and $G'$ respectively. 
Let $\xi$ be a continuous nonnegative 
real-valued function over $U_{\alpha}$ such that $\xi$ is
 compactly supported and  $\int_{U_{\alpha}}\xi(u)du=1$. 
Let $dt$ be the invariant Radon measure on $\Gamma'\backslash G'$
induced from $dg^{\,\prime}$.
\begin{lem}\label{lem2}
For large enough $T$ such that
$T\geq|\tau|$, there exist 
$f_{_{T}}\in J(\tau)$,
$h\in J'(\tau')$ such that $\|f_{_{T}}\|^2_{J(\tau)}=
\|h\|_{J'(\tau')}^2=1$ and 
\begin{enumerate}[label=\upshape(\roman*)]
\item\label{B}
$\big|L^{\rm mod}_{\tau,\tau' }(u\bm{.}f_{_{T}},h)\big|
\geq d_{f,h}\cdot T^{-\frac{d-1}{2}}$ 
for any $u\in U_{\alpha}$,
\item\label{C}
$\int_{U_{\alpha}}\int_{\Gamma'\backslash G'}
\big|\Phi_{f_{_{T}}}(tu)\big|^2\xi(u)dtdu\leqslant b$,
\end{enumerate}
where $d_{f,h}$ is a positive number depending on $f$, $h$, 
and $b$ is a positive number depending
on $\xi$.
\end{lem}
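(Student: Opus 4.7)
The plan is to construct $f_T$ as a $T$-dilation of a fixed smooth bump profile concentrated near a point $x_0=(x_0',x_0'')\in\mathbb{R}^{d-2}\times\mathbb{R}$ with $x_0''>0$, and to take $h$ as a fixed unit-norm test vector in $J'(\tau')$. Concretely, pick a nonnegative $f_0\in C_c^\infty(\mathbb{R}^{d-1})$ with $\int f_0\,dx=1$ supported in a small ball around the origin, and set
$$f_T(x)=c\,T^{(d-1)/2}\,f_0\bigl(T(x-x_0)\bigr),$$
where $c$ depends only on $f_0$. Since $\tau\in\textup{\bf i}\,\mathbb{R}$ makes $\|\cdot\|_{J(\tau)}$ a fixed scalar multiple of the $L^2$-norm on $\mathbb{R}^{d-1}$, this gives $\|f_T\|_{J(\tau)}=1$ uniformly in $T$. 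Take $h\in J'(\tau')$ to be a smooth unit-norm bump satisfying the genericity condition $\int(|x_0'-y|^2+x_0''^{\,2})^{\tau'-\rho'}h(y)\,dy\neq 0$, and shrink $\alpha$ so that $u\bm{.}x_0$ stays in a fixed compact set with strictly positive last coordinate for every $u\in\overline{U_\alpha}$.

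For \ref{B}, substitute $(u\bm{.}f_T)(x)=a(u^{-1}\bar{n}_x)^{-(\tau+\rho)}f_T(u^{-1}\bm{.}x)$ into the integral defining $L^{\rm mod}_{\tau,\tau'}(u\bm{.}f_T,h)$ and change variables $x=u\bm{.}(x_0+\eta/T)$. The factor $T^{(d-1)/2}$ from $f_T$ combines with the Jacobian $T^{-(d-1)}$ to produce the overall $T^{-(d-1)/2}$, while the condition $T\geq|\tau|$ controls the $\tau$-oscillation of the kernel across the rescaled support. Dominated convergence extracts the leading term
$$T^{-(d-1)/2}\,J_u(0)\,a\bigl(u^{-1}\bar{n}_{u\bm{.}x_0}\bigr)^{-(\tau+\rho)}\int K_{\tau,\tau'}(u\bm{.}x_0,y)\,h(y)\,dy,$$
whose modulus is a product of continuous, positive, $\tau$-\emph{independent} functions of $u\in\overline{U_\alpha}$: indeed $|a^{-(\tau+\rho)}|=a^{-\rho}$, and the only $\tau$-dependence of the kernel integral is the unimodular phase $|(u\bm{.}x_0)''|^{\textup{\bf i}\,\textup{Im}(\tau-\tau')}$. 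By continuity and the nonvanishing at $u=e$, shrinking $\alpha$ yields the uniform lower bound $d_{f,h}\,T^{-(d-1)/2}$.

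For \ref{C}, Lemma \ref{lem0} identifies $(t,u)\mapsto tu$ as a diffeomorphism of $G'\times U_\alpha$ onto an open subset of $G$; for $\alpha$ small, the image of any compact fundamental domain for $\Gamma'\backslash G'$ lies in finitely many $\Gamma$-translates of a fundamental domain for $\Gamma\backslash G$. Fubini and the quotient integral formula then bound the left-hand side of \ref{C} by $C\,\|\xi\|_\infty\,\|\Phi_{f_T}\|_{L^2(\Gamma\backslash G)}^2$ for a geometric constant $C$, and the $G$-equivariant isometry $\mathcal{B}^{-1}\circ\mathcal{A}^{-1}:J(\tau)\to V_{\tilde\phi}\subset L^2(\Gamma\backslash G)$ (up to a universal scalar, matching the normalizations of Section \ref{rep}) gives $\|\Phi_{f_T}\|_{L^2}\asymp\|f_T\|_{J(\tau)}=1$, yielding \ref{C} with $b$ proportional to $\|\xi\|_\infty$. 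The principal obstacle is the $\tau$-uniform lower bound in \ref{B}: the kernel oscillates rapidly in $\tau$, and it is only because $|K_{\tau,\tau'}|$ is $\textup{Im}(\tau)$-independent (a consequence of $\tau\in\textup{\bf i}\,\mathbb{R}$ and $\tau'$ being fixed) that the modulus of the leading coefficient persists without collapsing.
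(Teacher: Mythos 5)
Your proposal is essentially the paper's argument: a $T$-dilated bump $f_T$ concentrated at a point with positive last coordinate, a fixed bump $h$, and the "fattening" embedding $\Gamma'\backslash G'\times U_\alpha\hookrightarrow\Gamma\backslash G$ for part (ii). The structure, the normalizations, and the $T^{-(d-1)/2}$ scaling all match.

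There is, however, one step where your reasoning is imprecise in a way that matters. You invoke dominated convergence to extract the leading term after the change of variables $x=u\bm{.}(x_0+\eta/T)$, but dominated convergence gives a limit for \emph{fixed} $\tau$ as $T\to\infty$, whereas in the application (Proposition \ref{prop} sets $T=|\tau|$) the spectral parameter grows \emph{with} $T$. The phase of the integrand (from $a^{-(\tau+\rho)}$ and from $|x''|^{\tau}$ in the kernel) varies across the $\sim\epsilon_0/T$-scale support by an amount of order $|\tau|\,\epsilon_0/T$; under $T\geq|\tau|$ this is $O(\epsilon_0)$, bounded but not automatically $o(1)$. What saves the argument — and what the paper spells out via the explicit Taylor expansion of $\log(1+a)$ for $a$ built from $W_1/W_0$, $V_1/V_0$, etc. — is that the \emph{coefficient} multiplying $|\tau|/T$ can be made arbitrarily small by shrinking $\mathrm{supp}(f_0)$, $\mathrm{supp}(h)$, and $\alpha$. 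Your closing remark that "$|K_{\tau,\tau'}|$ is $\mathrm{Im}(\tau)$-independent" correctly identifies why the modulus of the leading coefficient does not collapse, but it is not by itself sufficient: one also needs this smallness of the phase variation across the support, which is precisely the role of the small support that you impose implicitly but never connect to the error estimate. Replacing "dominated convergence" with a quantitative phase estimate of this kind would close the gap; as written, the argument asserts a limit that is not uniform in $\tau$.

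Two minor remarks. Your genericity hypothesis on $h$ (that $\int(|x_0'-y|^2+x_0''^{\,2})^{\tau'-\rho'}h(y)\,dy\ne 0$) is weaker than what the paper uses — the paper instead shrinks $\mathrm{supp}(h)$ so that the $y$-dependent factor $\mathcal{S}$ is nearly constant and the integral is automatically nonzero for $h\geq 0$; either works since $\tau'$ is fixed. For (ii), your "finitely many $\Gamma$-translates" argument gives the bound with an extra geometric constant, whereas the paper's cleaner observation that $\Gamma'\backslash G'\times U_\alpha$ embeds (for small $\alpha$) gives $b=\sup\xi$ directly; both are fine.
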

\begin{proof}
To prove \ref{B}, we first assume that $\tau'\ne \rho'$, i.e., $J'(\tau')$ is irreducible. The case where $\tau'=\rho'$ will be discussed later. Let $f$ be a smooth nonnegative real-valued function 
over $\mathbb{R}^{d-1}$ such that 
\begin{itemize}[leftmargin=*]
\item
the support of $f$ is around 0 and sufficiently small; 
\item $\|f\|^2_{J(\tau)}=1$.
\end{itemize} 
See Section \ref{rep} for the definition of $\|\cdot\|_{J(\tau)}$. 
Define $f_{_{T}}$ to be
$$f_{_{T}}(x)=T^{\frac{d-1}{2}}\cdot f\big(Tx_1,\ldots, Tx_{d-2},
T(x_{d-1}-1)\big).$$
Clearly, $\|f_{_{T}}\|^2_{J(\tau)}=\|f\|^2_{J(\tau)}=1$ (remember that $\tau$ is purely imaginary). 
Let $h$ be a smooth nonnegative real-valued function on 
$\mathbb{R}^{d-2}$ 
such that
\begin{itemize}[leftmargin=*]
\item
the support of $h$ is around 0 and sufficiently small; 
\item $\|h\|^2_{J'(\tau')}=1$.
\end{itemize}

Prior to the detailed argument, we give a brief account on the idea for the proof of property \ref{B}. First off, we show that the kernel function $K_{\tau,\tau'}(x,y)$ is, roughly speaking, a nonzero constant for $x\in{\rm supp}(u\bm{.}f_{_T})$, $y\in{\rm supp}\,h$ (where $u$ lies in $U_\alpha$) and large $T$. Therefore $L^{\rm mod}_{\tau,\tau'}(u\bm{.}f_{_T},h)$ is of the same order with $\int_{\mathbb{R}^{d-1}}\int_{\mathbb{R}^{d-2}}u\bm{.}f_{_T}(x)h(y)dxdy$, that is, the kernel function plays no essential role in the model functional (for our choice of $u\bm{.}f_{_T}$ and $h$). Then we show that the latter integral has the same order with $\int_{\mathbb{R}^{d-1}}\int_{\mathbb{R}^{d-2}}f_{_T}(x)h(y)dxdy$.

Let $u=n_\nu\bar{n}_w\in
U_{\alpha}\subset
N^c\bar{N}$
where $\nu=(0,\ldots,0,\delta)$, $w=(w_1,\ldots,w_{d-1})\in
\mathbb{R}^{d-1}$.
By the assumption on $U_{\alpha}$,
both $\delta$ and $|w|$ are small.
Let $x=(x_1,\ldots,x_{d-1})$
and
write $n_\nu^{-1}
\bar{n}_x$ as $n_\nu^{-1}
\bar{n}_x=n_{-\nu}\bar{n}_x=
\bar{n}_zma_t
n_s\in\bar{N}MAN$ 
for some $z=(z_1,\ldots,z_{d-1})$,
$s=(s_1,\ldots,s_{d-1})\in
\mathbb{R}^{d-1}$ and $t\in\mathbb{R}_+$ (note that $z$, $t$, $s$ differ from those used in Lemma \ref{lem0}). The computation shows
\begin{equation}\label{zt-a}
t=1-2x_{d-1}\delta+|x|^2\delta^2,
\end{equation} 
\begin{equation}
\label{zt-b}
z_i=\left\{\begin{array}{ll}
\frac{x_i}{1-2x_{d-1}\delta+|x|^2\delta^2},&\textup{if}\,~1\leqslant i\leqslant d-2,\\[0.3cm]
\frac{x_{d-1}-\delta|x|^2}{1-2x_{d-1}\delta+|x|^2\delta^2},&\textup{if}\,~i=d-1.
\end{array}\right.
\end{equation} 
Thus we have
\begin{align*}
u\bm{.}f_{_{T}}(x)&=n_\nu\bm{.}\big[\bar{n}_w\bm{.}f_{_T}(x)\big]\\
&=a(n_\nu^{-1}\bar{n}_x)^{
-(\tau+\rho)}\big[\bar{n}_w
\bm{.}f_{_{T}}(n_\nu^{-1}\bm{.}\,x)\big]\\
&=t^{-(
\tau+\rho)}
\big[\bar{n}_w\bm{.}
f_{_T}(z)\big]\\
&=
t^{-(
\tau+\rho)}
f_{_{T}}(z-w)
\end{align*}

Next we shall analyze $\mathcal{T}$, $\mathcal{S}$ and $t^{-(\tau+\rho)}$ independently (recall that $\mathcal{T}\cdot\mathcal{S}$ is the kernel function).
The support of $u\bm{.}
f_{_{T}}$ is equal to 
$u\bm{.}{\rm supp}(f_{_{T}})
=n_\nu\bm{.}\big(\bar{n}_w\bm{.}\,{\rm supp}(f_{_{T}})\big)=
n_\nu\bm{.}\big(w+{\rm supp}(f_{_{T}})\big)$.
Write
$p\in{\rm supp}(f_{_{T}})$
as $p=(p_i)\in\mathbb{R}^{d-1}$.
By (\ref{zt-a}) and (\ref{zt-b}),
$x\in{\rm supp}(u\bm{.}f_{_{T}})$ can be written as $x=(x_i)\in\mathbb{R}^{d-1}$
where 
\begin{equation}\label{zt-2}
x_i=\left\{\begin{array}{ll}
\frac{p_i+w_i}{1+2\delta(p_{d-1}+
w_{d-1}) +|p+w|^2\delta^2},&1\leqslant i\leqslant d-2,\\[0.3cm]
\frac{p_{d-1}+
w_{d-1}+
|p+w|^2\delta}{1+2\delta(p_{d-1}+
w_{d-1})+
|p+w|^2
\delta^2},&i=d-1.
\end{array}\right.
\end{equation} 
Any $p=(p_i)\in{\rm supp}\,f_{_{T}}$ can be written as
$$p_1=\tfrac{\epsilon_1}{T},~~\dots,~~p_{d-2}=\tfrac{\epsilon_{d-2}}{T},~~p_{d-1}=1+\tfrac{\epsilon_{d-1}}{T},$$
where $\epsilon_j$'s are small real numbers.
Denote $\epsilon=(\epsilon_1,\ldots,\epsilon_{d-1})$.  Substituting this parameterization on 
$p$ into (\ref{zt-2}),
we get 
\begin{equation}\label{newx}
x_i=
\left\{
\begin{array}{ll}
\frac{w_i+\epsilon_iT^{-1}}{V_0(\delta,w)+V_1(\delta,w,\epsilon)T^{-1}+V_2(\delta,\epsilon)T^{-2}},&1\leqslant i\leqslant d-2,\\[0.3cm]
\frac{W_0(\delta,w)+W_1(\delta,w,\epsilon)T^{-1}+W_2(\delta,\epsilon)T^{-2}}{V_0(\delta,w)+V_1(\delta,w,\epsilon)T^{-1}+V_2(\delta,\epsilon)T^{-2}},&i=d-1,
\end{array}\right.
\end{equation}
where 
$$W_0(\delta,w)=
1+w_{d-1}+
\delta\left(|w|^2+1+2w_{d-1}\right),$$
$$W_1(\delta,w,\epsilon)=\epsilon_{d-1}
+2\delta\left(\epsilon_{d-1}+
\sum\limits_{i=1}^{d-1}
\epsilon_iw_i
\right),$$
$$W_2(\delta,\epsilon)=|\epsilon|^2\delta,$$
$$V_0(\delta,w)=1+2\delta(1+w_{d-1})
+\delta^2\left(|w|^2+1+2w_{d-1}\right),$$
$$V_1(\delta,w,\epsilon)=
2\delta\left(\epsilon_{d-1}+
\epsilon_{d-1}\delta+
\sum\limits_{i=1}^{d-1}
\epsilon_iw_i\delta
\right),$$
$$V_2(\delta,\epsilon)=|\epsilon|^2\delta^2.$$
Since $\delta$, $|w|$ and 
$|\epsilon|$ are very small, 
it follows that $W_0(\delta,w)$, $V_0(\delta,w)$ are close
to $1$, and $W_1(\delta,w,\epsilon)$, $V_1(\delta,w,\epsilon)$, 
$W_1(\delta,\epsilon)$, $V_1(\delta,\epsilon)$ are close to 0.
Denote by 
$P$, $Q$ the numerator and denominator of  $x_{d-1}$
in (\ref{newx}).
It is those $x$ lying in the support of $u\bm{.}f_{_{T}}$ 
that contribute to the integral $L^{\rm mod}_{\tau,\tau' }(u\bm{.}f_{_{T}},h)$, 
so in what follows
$x$ is always assumed to 
lie in 
${\rm supp}(u\bm{.}f_{_{T}})$.
For 
such $x$, we have $P$, $Q>0$ and
\begin{equation}\label{long}
\tau\log P=
\tau\log W_0(\delta,w)+ 
\tau\log
\left(1+\frac{
W_1(\delta,w,\epsilon)}{W_0(\delta,w)}T^{-1}+
\frac{
W_2(\delta,\epsilon)}{W_0(\delta,w)}T^{-2}\right).
\end{equation}
Since $|\tau|\leqslant T$ (a condition in the lemma),
applying the Taylor expansion
of $\log(1+a)$ 
to $$a=\frac{
W_1(\delta,w,\epsilon)}{W_0(\delta,w)}T^{-1}+
\frac{
W_2(\delta,\epsilon)}{W_0(\delta,w)}T^{-2}$$
shows that the second term on the right hand side 
of (\ref{long})
converges to 
$$W:=\frac{
W_1(\delta,w,\epsilon)}{W_0(\delta,w)}{\cdot}\lim_{T
\rightarrow\infty}|\tau|\,T^{-1},$$
as $T\rightarrow\infty$.
As $\lim_{T
\rightarrow\infty}|\tau|\,T^{-1}\leqslant 1$,  the number
$|W|$
is very small. Hence,
$e^{\tau\log P}$ is close to $e^{\tau
\log W_0(\delta,w)}$ (when $T$ is large enough), 
a complex number number which is
of norm $1$ and independent of 
$x$.  
The similar argument and conclusion 
also hold for
$e^{\tau\log Q}$. 
As a result, 
when $T$ is sufficiently large,
$|x''|^{\tau-\rho}=
|x_{d-1}|^{\tau-\rho}=e^{(\tau-\rho)
\log(P/Q)}$ is close to 
$$e^{\tau\log\frac{W_0(\delta,w)}{V_0(\delta,w)}}=
\left(\frac{W_0(\delta,w)}{V_0(\delta,w)}\right)^{\tau},$$ a complex number
which is of norm $1$ and independent of $x$. 
It is easy to see that 
$|x''|^{\tau'-\rho'}$ is close 1 when $T$ is large. In summary, the second component 
$\mathcal{T}$
in the kernel of the model functional is close to a fixed 
nonzero complex number independent of $x$, for $x\in{\rm supp}(u\bm{.}f_{_{T}})$
and $T$ large.

Next we consider the term 
$t^{-(
\tau+\rho)}$
for $x\in{\rm supp}
(u\bm{.}f_{_{T}})$.
Substituting (\ref{newx}) 
into (\ref{zt-a}),
we get
$$t=
\tfrac{F_0(\delta,w)+
\sum\limits_{i=1}^4
F_i(\delta,w,\epsilon)
T^{-i}}{H_0(\delta,w)+
\sum\limits_{i=1}^4
H_i(\delta,w,\epsilon)T^{-i}}$$
where $F_i$, $H_i$
are certain polynomials of the relevant variables 
such that $F_0(\delta,w)$, $H_0(\delta,w)$ are close to 1, and 
$F_i(\delta,w,\epsilon)$, $H_i(\delta,w,\epsilon)$ ($i\geq 1$) are 
close to 0 (for small $\delta$, $|w|$ and $|\epsilon|$). 
By use of the  argument in the last 
paragraph (i.e., applying the Taylor expansion 
$\log(1+a)$ to proper $a$), we can show that 
$t^{-(\tau+\rho)}$ 
converges to the nonzero complex number
$\left(\frac{F_0(\delta,w)}{H_0(\delta,w)}\right)^{\tau}$ as $T\rightarrow
\infty$, which is independent of $x$.

The other term $\mathcal{S}$ 
in the kernel function is easier to be handled.
By (\ref{newx}) we have
$$x_i\rightarrow\frac{w_i}{V_0(\delta,w)} ~~ (1\leqslant i\leqslant d-2)~~\textup{and}~~ x_{d-1}\rightarrow
\frac{W_0(\delta,w)}{V_0(\delta,w)},\quad\textup{as}~\,
T\rightarrow\infty.$$
As $\tau'$ is fixed, we may assume that the support of $h$ is very small (around 0) such that 
$|x'-y|^2+|x''|^2$
is close to $\frac{\sum_{i=1}^{
d-2}w_i^2+W_0^2(\delta,w)}{V_0^2(\delta,w)}$. Then $\mathcal{S}$ is close to a fixed nonzero complex number which is independent of $x$ and $y$ (for large $T$).

The above discussions show  that, for $x\in{\rm supp}(u\bm{.}f_{_T})$ and $y\in
{\rm supp}(h)$, the kernel function of the model functional and 
$t^{-(\tau+\rho)}$ are close to fixed nonzero 
complex numbers which are independent of $x$
and $y$, as $T\rightarrow\infty$. Thus we have the following
estimate
\begin{align*}
L^{\rm mod}_{\tau,\tau' }(u\bm{.}f_{_{T}},h)
&=\int_{\mathbb{R}^{d-1}}
\int_{\mathbb{R}^{d-2}}
K_{\tau,\tau'}(x,y)
t^{-(\tau+\rho)}
f_{_T}(z-w)h(y)dxdy\\[0.2cm]
&\asymp
\int_{\mathbb{R}^{d-1}}
\int_{\mathbb{R}^{d-2}}
f_{_{T}}(z-w)
h(y)dydx\\[0.2cm]
&=\int_{\mathbb{R}^{d-1}}
f_{_{T}}(z-w)dx\cdot
\int_{\mathbb{R}^{d-2}}
h(y)dy
\end{align*} 
To estimate the first integral in above, we deal with the Jacobian $\big(\frac{\partial x}{\partial z}\big)$.
In view of
$$\frac{\partial z_i}{\partial x_j}=\left\{
\begin{array}{ll}
\frac{\delta_{ij}(1-2\delta x_{d-1}+
|x|^2\delta^2)-x_i(2x_j\delta^2-
2\delta\delta_{j,d-1})}{\left(1-2x_{d-1}\delta
+|x|^2\delta^2\right)^2},&1\leqslant i\leqslant d-2,\\[0.3cm]
\frac{\delta_{d-1,j}
(1-2\delta x_{d-1}+
|x|^2\delta^2)-(x_{d-1}-\delta|x|^2)(-2\delta\delta_{j,d-1}+
2\delta^2x_j)}{\left(1-2x_{d-1}\delta+
|x|^2\delta^2\right)^2},
&i=d-1,
\end{array}
\right.$$
where $\delta_{ij}$ denotes the Kronecker symbol,
it is clear that 
$\frac{\partial z_i}{\partial x_j}$ is close to $\delta_{ij}$ uniformly for 
$x\in{\rm supp}(u\bm{.}f_{_{T}})$,
and small $\delta$. Hence
the determinant of the Jacobian matrix $\big(\frac{\partial x}{\partial z}\big)$ is close to 1 for 
small enough $\delta$, and we have \begin{equation*} 
\int_{\mathbb{R}^{d-1}}
f_{_{T}}(z-w)dx\cdot
\int_{\mathbb{R}^{d-2}}
h(y)dy\,\asymp\,\int_{\mathbb{R}^{d-1}}f_{_{T}}(x)dx\cdot
\int_{\mathbb{R}^{d-2}}
h(y)dy\,\asymp_{h}\, T^{-\frac{d-1}{2}}
\end{equation*}
which implies that 
$|L^{\rm mod}_{\tau,\tau' }(u\bm{.}f_{_{T}},h)|
\geq
d_{f,h}\cdot T^{-\frac{d-1}{2}}$ for some 
positive number 
$d_{f,h}$ depending on $f$ and $h$. 
Property \ref{B} is then verified for $\tau'\ne\rho'$. 

Now we treat the case where $\tau'=\rho'$. In this case,  we choose $h=\mathfrak{h}$. Let $f$ and $f_T$ be as before. Then 
$$L^{\rm mod}_{\tau,\rho'}(u\bm{.}f_T,\mathfrak{h})=\int_{\mathbb{R}^{d-2}}\mathfrak{h}(y)dy\cdot\int_{\mathbb{R}^{d-1}}|x''|^{\tau-\rho}u\bm{.}f_T(x)dx.$$ 
The first integral $\int_{\mathbb{R}^{d-2}}\mathfrak{h}(y)dy$ converges, noting that $\mathfrak{h}(y)=(1+|y|^2)^{-(d-2)}$ (here we may assume that $d\geq 3$ since $J'(\tau')$ is empty when $d=2$). As for the second integral, the argument is completely the same as before, and we have
$$\int_{\mathbb{R}^{d-1}}|x''|^{\tau-\rho}u\bm{.}f_T(x)dx\asymp T^{-\frac{d-1}{2}}.$$

Next we prove \ref{C}. When $\alpha$ is small, the product space $\Gamma'\backslash G'\times U_\alpha$ embeds into $\Gamma\backslash G$ via the map $(t,u)\mapsto tu$. 
The subset ${\cal X}_\alpha:=
\{tU_{\alpha}\,|\,t\in\Gamma'\backslash G'\}$ is 
open in
 $\Gamma\backslash G$.  So we have 
\begin{eqnarray*}
\int_{U_{\alpha}}\int_{\Gamma'
\backslash G'}\left|\Phi_{f_{_{T}}}(tu)\right|^2\xi(u)dtdu
&\leqslant&
\sup\limits_{g\in U_{\alpha}}\xi(g)\cdot\int_{U_{\alpha}}
\int_{\Gamma'\backslash G'}\left|\Phi_{f_{_{T}}}(tu)\right|^2dtdu
\\[0.2cm]
&=&b\cdot\int_{{\cal X}_\alpha}\left|\Phi_{f_{_{T}}}(x)\right|^2dx\\[0.2cm]
&\leqslant& b\cdot\int_{\Gamma\backslash G}\left|\Phi_{f_{_{T}}}(x)\right|^2dx\\[0.2cm]
&=&b\cdot\|f_{_{T}}\|^2_{J(\tau)}\\[0.2cm]
&=&b
\end{eqnarray*}
with $b=\sup\limits_{g\in U_{\alpha}}\xi(g)$. This proves \ref{C}.
\end{proof}
\begin{rmk}\label{fat}
\normalfont
The technique that is used to show 
property \ref{C}
is named ``fattening" in {\upshape\cite{re}}. 
Indeed, by the action of  $U_\alpha$ on $f_{_T}$ we enlarge
$\Gamma'\backslash
G'$ to a small open neighborhood of
$\Gamma'\backslash
G'\subset\Gamma\backslash G$. Intuitively, this is ``fattening" $\Gamma'\backslash
G'$.
\end{rmk} 
\begin{proof}[Proof of Proposition \ref{prop}]  
Let $\Psi_h\in L^2(\Gamma'\backslash G')$ 
be the corresponding element of $h$ under 
the map $\mathcal{A}^{\prime\,-1}
\circ\mathcal{B}^{\prime\,-1}$ (applied to $J'(\tau')$).
Then $\left\|\overline{\Psi_h}\right\|^2_{L^2(\Gamma'
\backslash G')}=
\|h\|^2_{J'(\tau')}=1$. Extend $\overline{\Psi_h}$ 
to an orthonormal basis $\{\kappa_{\ell}\}$ of $L^2(\Gamma'\backslash G')$. By Lemma \ref{lem2} we have 
\begin{align}
b&\geq
\int_{U_{\alpha}}\int_{\Gamma'\backslash G'}
\left|\Phi_{f_{_{T}}}(tu)\right|^2\xi(u)dtdu\notag\\[0.2cm]
&=\int_{U_{\alpha}}\left(\int_{\Gamma'\backslash G'}
\left|R(u)\Phi_{f_{_{T}}}(t)\right|^2dt\right)\xi(u)du\notag\\[0.2cm]
&=\int_{U_{\alpha}}\left(\int_{\Gamma'\backslash G'}
\left|\sum\limits_{\ell}\left\langle R(u)\Phi_{f_{_{T}}},\kappa_{\ell}\right\rangle_{\Gamma'\backslash G'}
\cdot\kappa_{\ell}(t)\right|^2dt\right)\xi(u)du\notag\\[0.2cm]
&=\int_{U_{\alpha}}\sum\limits_{\ell}\left|\left\langle R(u)
\Phi_{f_{_{T}}},\kappa_{\ell} \right\rangle_{\Gamma'\backslash G'}
\right|^2\xi(u)du\notag\\[0.2cm]
&\geq\int_{U_{\alpha}}\left|\left\langle R(u)\Phi_{f_{_{T}}},
\overline{\Psi_h} \right\rangle_{\Gamma'
\backslash G'}\right|^2\xi(u)du\notag\\[0.2cm]
&=\left|\left\langle R(u_0)\Phi_{f_{_{T}}},\overline{\Psi_h} \right\rangle_{\Gamma'\backslash G'}\right|^2\cdot 
\int_{U_{\alpha}}\xi(u)du
\qquad\textup{for some}~u_0\in U_\alpha\notag\\[0.2cm]
&=\left|L^{\rm aut}_{\tau,\tau' }\left(R(u_0)
\Phi_{f_{_{T}}},
\Psi_h\right)
\right|^2\notag\\[0.2cm]
&=\left|b_{\tau,\tau' }\cdot L^{\rm mod}_{\tau,\tau' }
\left(u_0.f_{_{T}},h\right)\right|^2\notag\\[0.2cm]
&\geq\left|b_{\tau,\tau' }
\right|^2\cdot d_{f,h}^2
\cdot T^{-(d-1)}\notag
\end{align}
Let $T=|\tau|$, then $$\left|b_{\tau,\tau' }\right|\leqslant b^{1/2}\,d_{f,h}^{-1}
\cdot|\tau|^{\frac{d-1}{2}}=c_{\tau'}\,|\tau|^{\frac{d-1}{2}}$$ 
with $c_{\tau'}=b^{1/2}\,
d_{f,h}^{-1}$ (see Remark \ref{rmk4}).  
This proves property \ref{C}.
\end{proof}
\begin{rmk}\label{rmk4}
\normalfont
The choice of $h$ in the proof of \ref{B} 
depends on $\psi$, or rather, $\tau'$. The reason is as follows.  
To circumvent the possible 
cancellation in the integral of $L^{\rm mod}_{\tau,\tau'}(u\bm{.}f_{_T},h)$ and then get a lower bound for the integral, we require that the first 
term $\mathcal{S}$
of the kernel function
is close to a fixed nonzero complex number independent 
of $x$, $y$.
For fixed $\tau'$, this is achieved by choosing 
a test function $h$ with sufficiently small support 
around $0$.
The larger $|\tau'|$ is, the smaller ${\rm supp}(h)$ should be.
Thus, $h$ and $d_{f,h}$ depend on $\tau'$. 
However, $\tau'$ does not 
influence the choice 
of $f$.
\end{rmk}
\begin{rmk}\label{rmk3}
\normalfont
The conjecture by Reznikov (see Section \ref{intro}) amounts to 
the following bound\,{\upshape:} $|b_{\tau,\tau'}|\ll|\tau|^{\varepsilon}$, 
i.e., $-\frac14$ in the power of the conjectured 
bound comes from the special value of the model functional. 
This motivates our conjecture for general 
situation\,{\upshape:} $|P_Y(\phi,\psi)|\ll
\lambda^{-\frac{n}{4}+\varepsilon}$
(as $\lambda\rightarrow
\infty$) where $Y$ is the 
$n$-dimensional special geodesic cycle 
of $X$. That is, $-\frac{n}{4}$ comes from the special value of the model functional (which is confirmed by \eqref{asymp} when $n=d-1$), and the scalar $b_{\tau,\tau'}$ should be bounded by $|\tau|^\varepsilon$ (which is widely open).
\end{rmk}

\section{Proof of Theorem \ref{thm2}}\label{pr2}
We will prove Theorem \ref{thm2} by slightly adjusting the test function 
$h$ appearing in the preceding section.
In particular, we prove uniform versions of Lemma \ref{lem2} and Proposition \ref{prop} for $\phi$, $\psi$ under certain restrictions.
The idea for the proof is, for the most part, the same as in Section \ref{pf} (so the proof in this section is sketchy). As before, the key is to show that $\mathcal{S}$, $\mathcal{T}$ and $t^{-(\tau+\rho)}$ are close to nonzero complex constants for $x$, $y$ lying in the support of special test functions. However, for fixed $h$, such an attempt fails when 
$|\tau'|$ is large. The reason is as follows. Given any $z=|x'-y|^2+|x''|^2\ne1$,
the absolute value of $\tau'\log z$ tends to
$\infty$, as 
$|\tau'|$ goes to
$\infty$, thus $\mathcal{S}=(|x'-y|^2+|x''|^2)^{\tau'-\rho'}$ (the first term of the kernel function) is not close to any fixed nonzero constant and our trick then does not work.
The similar problem will also happen to $\mathcal{T}$ (the second term of the kernel function) if $h$ is fixed and $|\tau'|$ is large. When $d=2$ (the case treated in \cite{re}), $\mathcal{S}$ does not occur since $J'(\tau')$ is empty, but $\tau'$ still occurs in $\mathcal{T}$. So the bound in \cite{re} is not uniform with respect to $\psi$.

To overcome the above-mentioned problem, we need to remedy the test function $h$.
Without loss of generality, let us write $\tau'=\textup{\bf i}\,r$ where $r\in\mathbb{R}_+$.
Let $f$, $f_{_T}$, $h$ be as in Section \ref{pf}. Define
$$h_r(y)=
r^{\frac{d-2}{2}}
h\left(ry\right).$$ 
Then 
$\|h_r\|^2_{J'(\tau')}
=\|h\|^2_{J'(\tau')}=1$ and Lemma \ref{lem2}\,\ref{C} still holds in the present situation. 
Any $y\in{\rm supp}(h_r)$ can be written as $y=(y_1,\ldots,y_{d-2})$ 
where $y_i=
\frac{\epsilon'_i}{r}$. 
Here the norm of $\epsilon'
:=(\epsilon'_1,\ldots,\epsilon'_{d-2})$
is sufficiently small by the assumption on $h$. 
Any $x\in{\rm supp}(u\bm{.}f_{_T})$ is expressed in \eqref{newx}. 
For the rest of this section,
we only consider those $x\in{\rm supp}(u\bm{.}f_{_T})$, 
$y\in{\rm supp}(h_r)$ which contribute to
the value of 
$L^{\rm mod}_{\tau,\tau' }(u\bm{.}f_{_{T}},
h_r)$.
For such $x$ and $y$, 
we can write $|x'-y|^2+|x''|^2$ as
$$\frac{P_0(\delta,w)+
\sum_{i=1}^4
\sum_{j=1}^2
P_{ij}(\delta,w,\epsilon,
\epsilon')
T^{-i}r^{-j}
}{
H_0(\delta,w)+
\sum_{i=1}^4
H_i(\delta,w,\epsilon)
T^{-i}}$$
where $H_i$'s
are the same as those appearing in the last section,
$P_0$ and $P_{ij}$  are certain  polynomials of the relevant variables
 (see the proof of Lemma \ref{lem2} for $\delta$, $w$, $\epsilon$).
All we need to know is that 
$P_0$, $H_0$ are close to 1, and
$P_{ij}$, $H_i$ ($i$, $j\geq1$) are close to 
0, for sufficiently small $\delta$, $|w|$,
$|\epsilon|$, $|\epsilon'|$. 
When applying the Taylor expansion of the logarithm function to $\big(|x'-y|^2+|x''|^2\big)^{\tau'}=\exp\big[\tau'\cdot\log(|x'-y|^2+|x''|^2)\big]$ and $|x''|^{\tau'}=\exp\big(\tau'\cdot\log|x''|\big)$, there will be terms like 
$$e^{\textup{\bf i}\,\frac{W_j(\delta,w,\epsilon)}{W_0(\delta,w)}\cdot\frac{r}{T}},\qquad
e^{\textup{\bf i}\,\frac{V_j(\delta,w,\epsilon)}{V_0(\delta,w)}\cdot\frac{r}{T}},
\qquad
e^{\textup{\bf i}\,\frac{H_j(\delta,w,\epsilon)}{H_0(\delta,w)}\cdot\frac{r}{T}}$$ occurring (among other terms). See Section \ref{pf} for the notations $W_j$, $V_j$. As before, we expect that these terms are close to $1$, or equivalently, 
\begin{equation}\label{expect}
\tfrac{W_j(\delta,w,\epsilon)}{W_0(\delta,w)}\cdot\tfrac{r}{T},\qquad
\tfrac{V_j(\delta,w,\epsilon)}{V_0(\delta,w)}\cdot\tfrac{r}{T},\qquad
\tfrac{H_j(\delta,w,\epsilon)}{H_0(\delta,w)}\cdot\tfrac{r}{T}
\end{equation}
are close to $0$ for small $\delta$, $|w|$ and $|\epsilon|$. Since $W_j(\delta,w,\epsilon)$, $V_j(\delta,w,\epsilon)$, $H_j(\delta,w,\epsilon)$ are close to $0$ (arbitrarily, if we require that $U_\alpha$ and the support of $f$, $h$ are sufficiently small) and $W_0(\delta,w)$, $V_0(\delta,w)$, $H_0(\delta,w)$ are close to $1$, we pose the following restriction 
\begin{equation}\label{restrict}
\tfrac{r}{T}\leq C,\quad\textup{or equivalently,}\quad\big|\tfrac{\tau'}{\tau}\big|\leq C, 
\end{equation}
under which our expectation will hold for large $r=|\tau'|$ (namely, those terms in \eqref{expect} are close to $0$). Here,
$C$ is any fixed positive number. In Section \ref{pf}, the condition \eqref{restrict} is also needed, but it is automatically satisfied since $\tau'$ is fixed there.

Under the restriction \eqref{restrict}, we can apply the trick used in the proof of Lemma \ref{lem2} 
to show that $\mathcal{S}$ is close to a fixed nonzero complex number which is
independent of $x$ and $y$, for sufficiently large $r$ and $T$.
Similarly and still under the restriction \eqref{restrict}, we can show that $\mathcal{T}$ and $t^{-(\tau+\rho)}$ are also close to fixed nonzero complex numbers independent of $x$ and $y$. Thus, $L^{\rm mod}_{\tau,\tau' }(u\bm{.}f_{_{T}},h_r)$ has the same order with $\int_{\mathbb{R}^{d-1}}\int_{\mathbb{R}^{d-2}}f_{_T}(x)h_r(y)dxdy$, which is $T^{-\frac{d-1}{2}}|\tau'|^{-\frac{(d-2)
}{2}}$, and Lemma \ref{lem2}\,\ref{B} is
modified to be
\begin{equation*}
\big|L^{\rm mod}_{\tau,\tau' }(u\bm{.}f_{_{T}},
h_r)\big|\geq c_2\cdot T^{-\frac{d-1}{2}}|\tau'|^{-\frac{(d-2)
}{2}}
\end{equation*}
for large 
$T$, $|\tau'|$ and any $u\in U_{\alpha}$.
Accordingly, the bound in Proposition \ref{prop} is modified to be
\begin{equation}\label{urb}
\left|b_{\tau,\tau' }\right|\leq c_1\cdot |\tau|^{\frac{d-1}{2}}
|\tau'|^{\frac{(d-2)
}{2}},\quad\textup{as}\,~|\tau|,~|\tau'|\rightarrow\infty.
\end{equation}
Note that the restriction \eqref{restrict} is required for the above modified versions of Lemma \ref{lem2}\,\ref{B} and Proposition \ref{prop}.

When both $|\tau|$ and $|\tau'|$ go to $\infty$, by Stirling's formula one verifies that
\begin{enumerate}[leftmargin=*,label=(\Alph*),itemsep=-.5ex,topsep=.5ex]
\item\label{l1}
if $\big||\tau|-|\tau'|\big|\leq D$ where $D$ is any fixed positive number, then
$$L^{\rm mod}_{\tau,\tau'}(\mathfrak{f},\mathfrak{h})\asymp_{_D} |\tau|^{-\rho+\frac14}\asymp |\tau'|^{-\rho+\frac14};$$
\item\label{l2}
if $\big||\tau|-|\tau'|\big|$ is not bounded and $|\tau|>|\tau'|$, then 
$$L^{\rm mod}_{\tau,\tau'}(\mathfrak{f},\mathfrak{h})\sim 
b_1\cdot |\tau|^{-\rho+\frac12}\big(|\tau|^2-|\tau'|^2\big)^{-\frac14},
$$ 
where $b_1$ is a positive constant;
\item \label{l3}
if $\big||\tau|-|\tau'|\big|$ is not bounded and $|\tau|<|\tau'|$, then $$L^{\rm mod}_{\tau,\tau'}(\mathfrak{f},\mathfrak{h})\sim b_1\cdot|\tau|^{-\rho+\frac12}\big(|\tau'|^2-|\tau|^2\big)^{-\frac14}e^{-\frac{\pi}{2}(|\tau'|-|\tau|)},$$ where $b_1$ is as in \ref{l2}.
\end{enumerate}
Here, $b_1$ depends only on $d$.
Combining \eqref{lin}, \eqref{restrict}, \eqref{urb} and \ref{l1}--\ref{l3} yields 
Theorem \ref{thm2}.

\small

\begin{flushleft}\small
Institute of Mathematics, AMSS, Chinese Academy of Sciences, Beijing, 100190, China\\
E-mail: \texttt{fsu@amss.ac.cn}
\end{flushleft}
\end{document}